\documentclass[11pt]{amsart}
\setlength{\topmargin}{-50pt}
\setlength{\footskip}{20pt}
\setlength{\oddsidemargin}{-.25cm}
\setlength{\evensidemargin}{-.25cm}
\setlength{\textwidth}{16.5cm}
\setlength{\textheight}{24.5cm}
\parskip=5pt

\usepackage[utf8]{inputenc}
\usepackage{amsmath,amsthm,amsfonts,amssymb,amscd,mathtools,mathdots}
\usepackage{verbatim}
\usepackage{mathrsfs}
\usepackage{multicol}
\usepackage{multirow} 
\usepackage{tabularx}
\usepackage[usenames,dvipsnames]{color}
\usepackage{enumerate}
\usepackage{graphicx}
\usepackage{color,xcolor}
\usepackage{bbm}
\usepackage[all]{xy}
\usepackage{hyperref}
\usepackage[capitalise]{cleveref}
\usepackage{pb-diagram,tikz-cd}

\theoremstyle{definition}
\newtheorem{thm}{Theorem}[subsection]

\newtheorem{cor}[thm]{Corollary}

\newtheorem{lem}[thm]{Lemma}

\newtheorem{ex}[thm]{Example}

\numberwithin{equation}{subsection}

\def\lie#1{\mathfrak{#1}}

\def\hlie#1{\hat{\mathfrak{#1}}}
\def\bs#1{\boldsymbol{#1}}
\def\res{{\rm res}}
\def\ch{{\rm ch}}

\def\chgr{{\rm ch}_{\rm gr}}
\def\endd{\hfill$\diamond$}

\def\soc{{\rm soc}}

\def\tab{{\rm Tab}}
\def\rab{{\rm Rab}}
\def\wt{{\rm wt}}
\def\qbinom#1#2{\begin{bmatrix} #1\\#2\end{bmatrix}}
\def\tqbinom#1#2{\text{$\left[\begin{smallmatrix} #1\\#2\end{smallmatrix}\right]$}}

\begin{document}

\title[Tensor Products, Excellent Filtrations, Weyl Group Orbits, and Tableaux Conting]{Tensor Product Decompositions, \\ Limits in Excellent Filtrations,\\ Affine Weyl Group Orbits,\\ and Tableaux Counting}
\author[L. Estivalez, A. Moura]{Laura Estivalez and Adriano Moura}
\thanks{The work of A.M. was partially supported by Fapesp grant 2018/23690-6. Part of this work was performed during L.E. M.Sc. studies and another part during her Ph.D. studies which were supported by the Coordenação de Aperfeiçoamento de Pessoal de Nível Superior – Brasil (CAPES) – Finance Code 001 and Fapesp grant 
2021/13007-0. We thank Deniz Kus and Valentin Rappel for useful discussions concerning socle computations and Dyck paths.}

\address{Departamento de Matemática, Universidade Estadual de Campinas, Campinas - SP - Brazil, 13083-859.}
\email{aamoura@unicamp.br, l265199@dac.unicamp.br}
	
\begin{abstract}
	We express the outer multiplicities in the tensor products of two fundamental simple modules for an affine Kac-Moody algebra of type $A$ in terms of counting certain sets of multipartitions by exploring the stabilizing limits of certain excellent filtrations. This extends for all ranks a previously obtained result by Jakeli\'c and the second author for rank $1$. The same outer multiplicities were previously computed by Misra and Wilson in terms of counting certain sets of tableaux. By comparing these two expressions and by explicitly exhibiting a combinatorial description of level-$2$ affine Weyl group orbits, we establish the existence of a bijection between the Misra-Wilson set of tableaux and a disjoint union of certain sets of multipartitions. 
\end{abstract}
	
\maketitle

\section{Introduction}

There are many examples of ``counting identities'' which can be proved by representation theoretic arguments. Such proofs arise from establishing different combinatorial expressions for the same representation theoretic invariant. For instance, the invariant we shall focus on here is the multiplicity of a simple module $S$ as a summand of a given integrable module $V$ lying in category $\mathcal O$ for an affine Kac-Moody algebra $\hlie g$. Let
\begin{equation*}
	[V:S]\in\mathbb Z_{\ge 0}
\end{equation*}
denote this multiplicity. More precisely, we focus on the case that $V$ is a tensor product of two integrable simple modules from $\mathcal O$. As commented in the introduction of \cite{jamo:exf}, to which the present paper can be regarded as a follow up, there are several papers dedicated to the problem of computing such multiplicities. 

One of the most prominent combinatorial gadget in Lie representation theoretic realm is the concept of Young tableaux. For instance, a certain set of Young tableaux was used to provide a model for fundamental highest-weight crystals of type $A_n^{(1)}$ in \cite{jmmo,mm}. Let $\Lambda_i, i\in\hat I=\{0,1,\dots,n\}$, denote the fundamental weights of $\hlie g$ and let $\hat P^+$ be the corresponding set of dominant affine weights. For $\xi\in\hat P^+$, we let $V(\xi)$ denote a simple module of highest weight $\xi$. Using these crystal models, it was proved in \cite{mw:tpgen} that
\begin{equation}\tag{1}\label{e:mwint}
	[V(\Lambda_0)\otimes V(\Lambda_i):V(\xi)] = \tau^n_i(\eta)
\end{equation} 
where $\eta = \Lambda_0+\Lambda_i-\xi$ and $\tau^n_i(\eta)$ counts the elements of a certain set of extended Young tableaux which we refer to as the set of MW $i$-tableaux. The precise definition of such tableaux is reviewed (or rather, rephrased) in \cref{ss:mwt}. 

On the other hand, the main result of \cite{jamo:exf} gives an expression for computing
\begin{equation*}
	[V(\Lambda_0)\otimes V(\Lambda):V(\xi)]
\end{equation*}
for any $\Lambda,\xi\in\hat P^+$ in the case that the underlying simple Lie algebra $\lie g$ is simply laced, in terms of multiplicities in excellent filtrations, also known as Demazure flags. 
The expression has the following form
\begin{equation}\tag{2}\label{e:jmi}
	[V(\Lambda_0)\otimes V(\Lambda):V(\xi)] =  \sum_{ \mu\in P^-_\xi}  \max_{\gamma\in P^-_\Lambda}\ [D(\gamma): D(\mu)].
\end{equation}
Here, $P^-_\xi$ denotes a certain subset of the orbit of $\xi$ by the action of the affine Weyl group and $D(\mu)$ is the Demazure module generated by the weight space $\mu$ inside $V(\xi)$. Also, if $\ell$ is the level of $\xi$, so the level of $\Lambda$ is $\ell-1$, $[D(\gamma): D(\mu)]$ denotes the multiplicity of $D(\mu)$ in a level-$\ell$ Demazure flag for $D(\gamma)$. 
If $\lie g$ is of type $A_1$, the $\max$ in \eqref{e:jmi} was realized in \cite{jamo:exf} as a stabilizing limit. We extend this result for type $A_n$ in \cref{t:typeAn}, whose statement has the form
\begin{equation}\tag{3}\label{e:jmlimi}
	[V(\Lambda_0)\otimes V(\Lambda):V(\xi)] =  \sum_{\mu\in P^-_\xi}  \lim_{k\to\infty} [D(\gamma_k): D(\mu)],
\end{equation}
where $\gamma_k$ is an explicitly (and easily) described sequence of elements in $P^-_\Lambda$. 

In the case that $\ell=2$, the numbers $[D(\gamma): D(\mu)]$ have been computed in \cite{bcsw:mdpl2} in terms of products of $q$-binomials (the precise statement is recalled in \eqref{e:multflagAn}). Such expression establishes a connection between Demazure-flag multiplicities and partitions (see \eqref{e:binmpart}). Once this connection is established, \cref{l:mpart=0} essentially explains how to compute the limit in \eqref{e:jmlimi}. This leads to the main representation theoretic result of the present paper, \cref{t:typeAnpart}, whose statement has the form
\begin{equation}\tag{4}\label{e:ompi}
	[V(\Lambda_0)\otimes V(\Lambda_i):V(\xi)] = \sum_{\mu\in P^-_\xi} \rho_{\bs \mu_0}(f_{i,\xi}(\mu)).
\end{equation}  
Here, $f_{i,\xi}$ is a certain quadratic function explicitly defined in \eqref{e:formwt} and $\rho_{\bs b}(m)$, where $\bs b=(b_1,\dots,b_n)\in\mathbb Z^n_{\ge 0}$, counts the elements of a certain set of multipartitions of $m\in\mathbb Z$ (see \eqref{e:multpartc}), whose parts are bounded by the numbers $b_j, 1\le j\le n$. Comparing this with \eqref{e:mwint}, we get
\begin{equation}\tag{5}\label{e:combidi}
	\tau^n_i(\eta) = \sum_{\mu\in P^-_\xi} \rho_{\bs \mu_0}(f_{i,\xi}(\mu)).
\end{equation}
Our main combinatorial result, \cref{t:maincomb}, arises from this after a combinatorial description of the set $P^-_\xi$ is obtained by characterizing the affine Weyl group orbit of $\xi$ in \cref{ss:orbits}. Thus, the ``tableaux counting'' given by $\tau^n_i$ can be done by a ``partition counting'' without any reference to representation theory of Lie algebras. This is illustrated in \cref{ex:counting}. 

The aforementioned results solve a part of the problems mentioned in \cite{jamo:exf} as topics for future publications. Let us comment on those which are not treated here.  While \eqref{e:jmi} is valid for all simply laced $\lie g$ and $\Lambda$ of all levels, \eqref{e:jmlimi} has been proved only for type $A$. The core of the proof is \cref{c:inccof}, which allows us to produce the sequence $\gamma_k$. We have a candidate for a corresponding result for type $D$ and the proof is apparently within reach. Extending for type $E$ is also probably not out of reach, so analogues of \eqref{e:jmlimi} for these types should not be too difficult to obtain. We have also obtained formulas similar to \eqref{et:socle'} for types $B,C,D$, and $G$ and, hence, purely combinatorial descriptions of the sets $P^-_\xi$  similar to \eqref{e:Gxipart'} for theses types are also within reach and we expect they should appear in a forthcoming publication soon motivated by other kind of applications. 

The real difficulty apparently lies in computing the limit on the right-hand side of \eqref{e:jmlimi} for types $D$ and $E$ and even for type $A$ in the case $\ell>2$. The approach we used here rely on formulas for Demazure-flag multiplicities in terms of $q$-binomials. Such formula exists for type $A_1$ and $\ell=3$ \cite[Proposition 1.4]{bcsv:dfcpmt}. In this case, following step-by-step the computations we have done here should lead to an expression similar to \eqref{e:ompi} for tensor products of the form $V(\Lambda_0)\otimes V(\Lambda)$ with $\Lambda$ of level $2$. We are not aware of formulas of this type for higher $\ell$ or other types beside $A_1$. Still in type $A_1$, it was obtained in \cite{bk} an expression for multiplicities in level-$\ell$ Demazure flags for level-$1$ Demazure modules for any $\ell$ in terms of certain sets of Dyck paths. For $\ell=2$, we have established a bijection of such Dyck paths with certain sets of partitions and recovered the results of \cite{jamo:exf}. For general $\ell$, the expressions from \cite{bk} can be used to produced alternating expressions for level-$\ell$ Demazure flags for level-$(\ell-1)$ Demazure modules in terms of Dyck paths, so they can be plugged in on the right-hand-side of \eqref{e:jmlimi}. However, so far, we have not found a way of computing the limit using such expressions  if $\ell>2$. Thus, finding a way of computing the limit in \eqref{e:jmlimi} beyond the aforementioned cases remains a ground for investigations. Of course, searching for formulas with  flavor similar to \eqref{e:mwint} for more general tensor products may also be an interesting topic of investigation, leading to purely combinatorial statements in the spirit of \cref{t:maincomb}. 

The paper is organized as follows. \cref{s:parttab} is dedicated to setting up the notation related to partitions and tableaux, culminating with the statement of \cref{t:maincomb} and the aforementioned \cref{ex:counting}. We start \cref{s:tpd} by setting the Lie theoretic notation for finite and affine Kac-Moody algebras and their representations. The needed background on Demazure modules and flags are reviewed in \cref{ss:dem}. \cref{s:tpd} culminates with the statements of the main results on tensor product multiplicities: \cref{t:typeAn}, \cref{t:typeAnpart}, and \eqref{e:tpgenA}. \cref{s:orbits} is dedicated to the characterization of affine Weyl group orbits, leading to a combinatorial description of the sets $P^-_\xi$ in \eqref{e:Gxipart'}. We illustrate this description in special cases in \cref{ex:sup2soc}. A subcase of \cref{ex:sup2soc}, combined with \cref{t:typeAnpart} and \eqref{e:mwint}, leads to a proof of \cref{t:maincomb} in \cref{ss:maincomb}. The proof of \cref{t:typeAn} is given in the first two subsections of \cref{s:omp}, while the proof of \cref{t:typeAnpart} is given in the other two subsections.

\section{Partitions and Tableaux}\label{s:parttab}

Throughout the paper, let $\mathbb C, \mathbb R, \mathbb Q$, and $\mathbb Z$ denote the sets of complex, real, rational, and integer numbers, respectively. Let also $\mathbb Z_{\ge m} ,\mathbb Z_{< m}$, etc.,  denote the obvious subsets of $\mathbb Z$. Given $a,b\in\mathbb Z$, we let $[a,b]$ be the corresponding interval $	[a,b] = \mathbb Z_{\ge a}\cap \mathbb Z_{\le b}$. 
We denote by $\Sigma_n$ the symmetric group seen as permutation of $[1,n]$. The cardinality of a set $A$ will be denoted by $|A|$.
The symbol $\cong$ means ``isomorphic to''. We shall use the symbol $\diamond$ to mark the end of remarks, examples, and statements of results whose proofs are postponed. The symbol \qedsymbol\ will mark the end of proofs as well as of statements whose proofs are omitted.

\subsection{Partitions}
Given $\bs m=(m_1,\dots,m_l) \in \mathbb Z^l$ and $k\in\mathbb Z$, set 
\begin{equation}\label{e:|part|}
	|\bs m|_j = \sum_{i=1}^j m_i \ \ \text{for}\ \ 0\le j\le l, \quad |\bs m|=|\bs m|_l, 
\end{equation}
and
\begin{equation}\label{e:sumntopart}
	k\pm\bs m = (k\pm m_1,\dots,k\pm m_l).
\end{equation}  
Let
\begin{equation*}
	\mathcal S^l(m) = \{\bs m\in \mathbb Z_{\ge 0}^l:|\bs m|=m\}.
\end{equation*}
By a partition of $m\in\mathbb Z_{\ge 0}$ of length at most $l$ we mean an element $\bs m\in\mathcal S^l(m)$ such that 
\begin{equation*}
	m_1\ge m_2\ge \cdots\ge m_l
\end{equation*}
The numbers $m_i$ with $m_i\ne 0$ will be referred to as the parts of $\bs m$. We denote the set of all such elements of $\mathbb Z^l$ by $\mathcal P^l(m)$ and identify $\mathcal P^{l-1}(m)$ with the subset of $\mathcal P^l(m)$ consisting of elements $\bs m$ such that $m_l=0$. Under this identification, one easily sees
\begin{equation}\label{e:stabnparts}
	l\ge m \quad\Rightarrow\quad \mathcal P^l(m) = \mathcal P^k(m) \ \ \text{for all}\ \ k\ge l.
\end{equation}
We also set
\begin{equation*}
	\mathcal P(m) = \bigcup_{l\ge 0}\mathcal P^l(m) = \mathcal P^m(m), \quad \mathcal P^l =\bigcup_{m\ge 0}\mathcal P^l(m), \quad \mathcal P = \bigcup_{m\ge 0}\mathcal P(m),
\end{equation*} 
and refer to the elements of $\mathcal P(m)$ as the partitions of $m$ and to elements of $\mathcal P$ as partitions. We also let
\begin{equation}
	\mathcal P^{=l}(m) = \mathcal P^l(m)\setminus\mathcal P^{l-1}(m) 
\end{equation}
denote the set of partitions of $m$ with exactly $l$ parts, and similarly define $\mathcal P^{=l}$. If $\bs m\in\mathcal P^{=l}$ we say $\bs m$ has length $\ell(\bs m)=l$. 

Given $\bs m\in\mathcal P^l(m)$, let
\begin{equation*}
	\mathscr P(\bs m) = \{m_j: 1\le j\le l, m_j\ne 0\}
\end{equation*}
be the set consisting of the parts of $\bs m$. Suppose 
\begin{equation*}
	\mathscr P(\bs m) = \{k_1,k_2,\dots,k_s\} \ \ \text{with}\ \ k_1>k_2> \cdots > k_s
\end{equation*}
and let $r_j$ be the multiplicity of $k_j$ as a part of $\bs m$. In that case, we shall also use the following notation:
\begin{equation}\label{e:partmulnot}
	\bs m = (k_1^{r_1}, k_2^{r_2}, \cdots, k_s^{r_s}).
\end{equation}

 Consider the natural action of the symmetric group $\Sigma_l$ on $\mathbb Z^l$, i.e.,
\begin{equation*}
	\sigma(m_1,\dots,m_l) = (m_{\sigma(1)},\dots,m_{\sigma(l)})
\end{equation*}
and, given $\bs m\in\mathbb Z^l_{\ge 0}$, denote by $\overline{\bs m}$ the unique element of the orbit $\Sigma_l\bs m$ which is a partition of $|\bs m|$. So, by definition 
\begin{equation}\label{e:barm}
	\{\overline{\bs m}\} = \mathcal P^l\cap \Sigma_l\bs m.
\end{equation}

Given $b\in\mathbb Z_{\ge 0}$, let $\mathcal P_b(m)$ be the subset of $\mathcal P(m)$ of partitions whose parts are bounded by $b$, $\mathcal{P}^{l}_{b}(m) = \mathcal P^l(m)\cap\mathcal P_b(m)$, and  similarly define $\mathcal P^{=l}_b(m)$, and so on. Set also $\rho(m)=|\mathcal P(m)|$, and similarly define $\rho^l(m), \rho_b(m)$, and $\rho^l_b(m)$. We set
\begin{equation*}
	\rho(m)=0 \ \ \text{for}\ \ m\notin\mathbb Z_{\ge 0}
\end{equation*}
and similarly in the case of  $\rho^l(m)$, etc.. For $\bs m \in \mathcal P^l_b(m)$, define 
\begin{equation}\label{e:addlp}
	(b,\bs m) = (b,m_1 , \cdots , m_l)\in \mathcal P^{l+1}_b(m+b)
\end{equation}
and 
\begin{equation}\label{e:difp}
	\bs m^- = (m_1-m_2,m_2-m_3,\dots,m_{l-1}-m_l,m_l)\in\mathbb Z_{\ge 0}^l.
\end{equation}

Finally, we will also need a certain set of multipartitions defined as follows. Given $\bs a,\bs b\in\mathbb Z^l_{\ge 0}$, set
\begin{equation}\label{e:multpart}
		\mathcal{P}_{\bs b}(m)  = \bigcup_{\bs m\in\mathcal S^l(m)} \mathcal{P}_{b_1}(m_1)\times \cdots \times \mathcal{P}_{b_l}(m_l), \quad 
		\mathcal{P}_{\bs b}^{\bs a}(m)  = \bigcup_{\bs m\in\mathcal S^l(m)} \mathcal{P}_{b_1}^{a_1}(m_1)\times \cdots \times \mathcal{P}_{b_l}^{a_l}(m_l),
\end{equation}
$\rho_{\bs b}(m)=|\mathcal{P}_{\bs b}(m)|$, and $\rho^{\bs a}_{\bs b}(m)=|\mathcal{P}^{\bs a}_{\bs b}(m)|$.  Thus,
\begin{equation}\label{e:multpartc}
	\rho_{\bs b}(m) = \sum_{\bs m\in\mathcal S^l(m)} \prod_{j=1}^l \rho_{b_j}(m_j) \quad\text{and}\quad \rho_{\bs b}^{\bs a}(m) = \sum_{\bs m\in\mathcal S^l(m)} \prod_{j=1}^l \rho_{b_j}^{a_j}(m_j).
\end{equation}
Note
\begin{equation}
	b_j = 0 \quad\Rightarrow\quad \rho_{\bs b}(m) = \rho_{\bs b'}(m)
\end{equation}
where $\bs b'\in\mathbb Z^{l-1}$ is obtained from $\bs b$ by deleting the $j$-th entry.  Also,
\begin{equation}
	\bs b\in\mathbb Z^l_{>0} \quad\Rightarrow\quad \rho_{\bs b}(1) = l.
\end{equation}

\subsection{A Tableaux Counting Problem}\label{ss:mwt}

We identify a partition $\bs m$ with the Young diagram whose $r$-th row has $m_r$ boxes. We denote this diagram by $D_{\bs m}$ and we say it has shape $\bs m$,  as usual. By the box in position $(r,c)$ we mean the $c$-th box in the $r$-th row of the diagram. In other words, we identify $D_{\bs m}$ with the following subset of $\mathbb Z^2$:
\begin{equation*}
	D_{\bs m} = \{(r,c)\in\mathbb Z^2: 1\le c\le m_r, 1\le r\le \ell(\bs m)\}. 
\end{equation*}
The partition with no parts is associated to the empty diagram $D_\emptyset$. 

Fix $n\in\mathbb Z_{>0}$, let $\hat I=[0,n]$, and, given $a,b\in\mathbb Z$,  write
\begin{equation}\label{e:congn}
	a\equiv b \ \ \text{if}\ \  a-b\in(n+1)\mathbb Z.
\end{equation}
By an extended tableau of rank $n$ and shape $\bs m$ we mean a function
\begin{equation*}
	T:D_{\bs m}\to \hat I.
\end{equation*} 
As usual, this can be described in terms of boxes by saying that $T$ is a Young diagram of shape $\bs m$ whose boxes have been filled in with elements of $\hat I$. If a given box has been filled in with $i\in\hat I$, we shall say that box has content $i$.  Henceforth, we refer to an extended tableau of rank $n$ simply by a tableau. 
We denote by $\tab$ the set of all  tableaux and by $\tab_{\bs m}$ the subset of tableaux of shape $\bs m\in\mathcal P$. We also consider a tableau associated to the partition having no parts, which is denoted by $T_\emptyset$. 

Consider the function
\begin{equation}
	\gamma:\tab\to \mathbb Z^{n+1}, \quad T\mapsto \gamma(T) = (\gamma_0(T),\dots, \gamma_n(T))
\end{equation}
where $\gamma_i(T), i\in\hat I$, is the number of boxes in $T$ whose content is $i$. We think of $\gamma(T)$ as the ``content character'' of $T$. 
Note
\begin{equation*}
	T \in\tab_{\bs m} \quad\Rightarrow\quad \bs m\in\mathcal P(|\gamma(T)|).
\end{equation*}
Thus,
\begin{equation*}
	\gamma^{-1}(\bs\eta)\subseteq \bigcup_{\bs m\in\mathcal P(|\bs\eta|)} \tab_{\bs m} \quad\text{for all}\quad \bs\eta\in\mathbb Z_{\ge 0}^{n+1}. 
\end{equation*}
A natural combinatorial problem at this point is to obtain methods for counting the number of tableaux of a given character, i.e., for computing
\begin{equation*}
	|\gamma^{-1}(\bs\eta)| \ \ \text{for}\ \ \bs\eta\in\mathbb Z_{\ge 0}^{n+1}. 
\end{equation*}
There are certainly several purely combinatorial approaches for performing this counting in terms of sets of (multi)partitions. Our goal, however, is to explore the connection of such kind of problem with representation theory of affine Kac-Moody algebras, so we will not pursue this direction here. More precisely, we will present a formula, in terms of multipartitions and whose proof arises from representation theoretic arguments, for computing 
\begin{equation*}
	|\gamma^{-1}(\bs\eta)\cap \mathbb W_i| 
\end{equation*}
where $\mathbb W_i$ is the subset of $\tab$ consisting of regular $i$-charged tableaux satisfying a couple of extra conditions, for each $i\in\hat I$. Since these conditions originally arose in \cite[Lemma 2.3]{mw:tpgen}, we shall refer to such tableaux as Misra-Wilson tableaux of charge $i$ or, simply, MW $i$-tableaux.  The remainder of this section is dedicated to reviewing the definition of such tableaux. The set of regular $i$-charged tableaux was used as a model for the affine $i$-th fundamental highest-weight crystal of type $A_n^{(1)}$ in \cite{jmmo,mm}. We follow closely the review made in \cite{mw:tpgen}, with modified notation. 

A partition $\bs m$ is $n$-regular (or simply regular if $n$ is clear from the context) if each part of $\bs m$ repeats at most $n$ times. In the spirit of \eqref{e:partmulnot}, this can be written as
\begin{equation}
	\bs m = (k_1^{r_1},k_2^{r_2},\dots,k_s^{r_s}) \ \ \text{with}\ \ r_i\le n \ \ \text{for all}\ \ 1\le i\le s.
\end{equation}
A tableau with regular shape will be said a regular tableau. In such a tableau, there are at most $n$ rows having any given number of boxes. Let us denote by $\rab$ the set of all regular tableaux and set $\rab_{\bs m} = \rab\cap\tab_{\bs m}$. A nonempty tableau $T$ is said to have charge $i\in\hat I$ if
\begin{equation}
	T(r,c) \equiv c-r+i 
\end{equation}
for all possible values of $(r,c)$. The empty tableaux will be regarded as an $i$-charged tableau for all $i\in\hat I$. Otherwise, there exists exactly one $i$-charged tableau for each shape, which we denote by $T_{\bs m,i}$ (see \cite[Figure 1]{mw:tpgen}). Thus, we have injective  maps
\begin{equation}
	\kappa_i: \mathcal P \to \tab, \ \ \bs m\mapsto T_{\bs m,i} \quad\text{for}\quad i\in\hat I. 
\end{equation}
The image of $\kappa_i$, i.e., the set of all $i$-charged tableaux, will be denoted by $\tab^i$ and we also set $\rab^i =\tab^i\cap \rab$. Let $\mathcal M_i$ denote the subset of $\mathcal P$ satisfying the following  condition:
\begin{equation}\label{e:mwcond}
	\bs m = (k_1^{r_1},k_2^{r_2},\dots,k_s^{r_s}) \in\mathcal M_i \quad\Leftrightarrow\quad k_l+i\equiv r_l +2 \sum_{j=1}^{l-1}r_j \ \ \text{for all}\ \ 1\le l<s.
\end{equation}
The original phrasing of the above in \cite[Lemma 2.3]{mw:tpgen} was
\begin{equation*}
	\bs m = (k_1^{r_1},k_2^{r_2},\dots,k_s^{r_s}) \in\mathcal M_i \quad\Leftrightarrow\quad r_1\equiv i + k_1  \ \ \text{and}\ \ k_{l+1}-k_l\equiv r_l+r_{l+1} \ \ \text{for all}\ \ 1\le l<s,
\end{equation*}
which is easily seen to be equivalent to \eqref{e:mwcond}. 
Finally, the set of MW $i$-tableaux is
\begin{equation}
	\mathbb W_i:= \kappa_i(\mathcal M_i)\cap\rab^i. 
\end{equation}

\subsection{A Counting Matching}
Let $\hat I=[0,n]$ as in the previous section and denote by $\mathbb P_i^+, i\in\hat I$, the subset of $\mathbb Z^{n+1}_{\ge 0}$ consisting of the elements $\bs\eta$ satisfying 
\begin{equation}\label{e:domceta}
	\eta'_r:=\delta_{0,r}+\delta_{i,r}-2\eta_r+\eta_{r-1}+\eta_{r+1}\ge 0 \quad\text{for all}\quad r\in\hat I. 
\end{equation} 
Here, for $l\in\mathbb Z$, we have set $\eta_l=\eta_r$ if $l\equiv r\in\hat I$. 
Let $\bs e_j, j\in\hat I$, be the element $(0,\dots,0,1,0\dots,0)\in\mathbb R^{n+1}$ where $1$ is in the entry $j+1$. The proof of \cite[Lemma 2.4]{mw:tpgen} shows
\begin{equation}\label{e:xi=jk}
	\bs\eta\in\mathbb P_i^+ \quad\Rightarrow\quad \bs\eta' = \bs e_j+\bs e_k \ \ \text{for some}\ \ j,k\in\hat I \ \ \text{such that}\ \ j+k\equiv i.
\end{equation}

\begin{ex}\label{ex:domn=2}
	For $n=2$, it follows that 
	\begin{equation*}
		\bs\eta\in\mathbb P_1^+\quad\Leftrightarrow\quad \bs\eta = \eta_0(1,1,1) -\epsilon\,\bs e_2\ \ \text{with}\ \ \epsilon\in\{0,1\}.  
	\end{equation*}
	Indeed, if $\epsilon = 0$, $\bs\eta'=(1,1,0)$ and if $\epsilon=1$, $\bs\eta'=(0,0,2)$. Note the former corresponds to $j=0, k=1$ in \eqref{e:xi=jk}, while the latter corresponds to $j=k=2$. No other pair $(j,k)$ satisfies $j+k\equiv 1$, so these are the only cases.
	\endd
\end{ex}

Set
\begin{equation}
	\tau_i^n(\bs\eta) = |\gamma^{-1}(\bs\eta)\cap \mathbb W_i|.
\end{equation}
Let us prepare the notation for presenting a matching of the counting function $\tau^n_i(\bs\eta), \bs\eta\in\mathbb P^+_i$, with the counting of the union of certain sets of the form $\mathcal P_{\bs b}(m)$.  
Given $\bs m\in\mathbb Z^n$, recall \eqref{e:congn} and let $\res (\bs m)\in\hat I$ be the element satisfying
\begin{equation}\label{e:resp}
	\res(\bs m)+|\bs m|\equiv 0.
\end{equation}
Given $(\bs m,\bs p)\in[1,2]^n\times \mathbb Z^n$, set
\begin{equation}\label{e:acomb}
	\bs a(\bs m,\bs p) = (a_1,\dots,a_n) \quad\text{with}\quad a_i=2p_i+m_i,
\end{equation}
and let $\mathscr P^+$ be the subset of $[1,2]^n\times \mathbb Z^n$ whose elements satisfy
\begin{equation}\label{e:pfordomcomb}
	p_i\ge -1 \ \forall\ i\in I, \quad p_i=-1 \ \Rightarrow\ m_i=2, \quad\text{and}\quad  p_1\ge p_2\ge\cdots\ge p_n. 
\end{equation}
Equivalently, 
\begin{equation*}
	(\bs m,\bs p)\in\mathscr P^+ \quad\Leftrightarrow\quad \bs a(\bs m,\bs p)\in\mathcal P^n. 
\end{equation*}

Given $1\le s\le n+1$, set
\begin{equation}\label{e:ms2}
	\bs m(s) = (2^{s-1},1^{n+1-s})\in\mathcal P^n_{2}.
\end{equation}  
Let also $\mathcal O(s)$ be the orbit of $\bs m(s)$ in $\mathbb Z^n$ under the usual action of $\Sigma_n$. Given $j,k\in\hat I$, let 
\begin{equation}\label{e:parpar2}
	\mathcal S_{j,k} = \{s\in[1,n+1]: s\equiv \pm|j-k| \}.
\end{equation}
For $s\in\mathcal S_{j,k}$, let also $p_{j,k}(s)\in\hat I$ be defined by 
\begin{equation}\label{e:ps2}
	 p_{j,k}(s)+1\equiv 
	\begin{cases}
		\max\{j,k\}, & \text{if } s\equiv |j-k|, \\ \min\{j,k\}, & \text{if } s\equiv - |j-k|
	\end{cases}
\end{equation}
and set
\begin{equation}\label{e:Gjk}
	\Gamma_{j,k} = \{(\bs m,\bs p)\in\mathscr P^+: \bs m\in\mathcal O(s) \text{ and }\res(\bs p)=p_{j,k}(s) \text{ for some } s\in\mathcal S_{j,k}\}. 
\end{equation}
Note $\mathcal S_{k,j}=\mathcal S_{j,k},\Gamma_{k,j}=\Gamma_{j,k}$, and $p_{j,k}(s)=p_{k,j}(s)$. 

\begin{ex}\label{ex:Gn=2}
	Note $\mathcal S_{j,j} = \{n+1\}, p_{j,j}(n+1)\equiv j-1$ and $\bs m(n+1)=(2^n)$. In particular, $\mathcal O(\bs m(n+1))=\{(2^n)\}$.  Then,
	\begin{equation}
		((2^n),\bs p)\in\Gamma_{j,j} \quad\Leftrightarrow\quad p_1\ge \cdots\ge p_n\ge -1 \ \ \text{and}\ \  |\bs p|\equiv 1-j. 
	\end{equation}
	In that case,
	\begin{equation}\label{e:aj=k}
		\bs a((2^n),\bs p) = 2(1+\bs p).
	\end{equation}
	This gives a complete description of $\Gamma_{j,j}$ for all $n$ and $j\in\hat I$ as well as of the corresponding elements $\bs a(\bs m,\bs p)$. For $n=2$, the pair $j=k=2$ appeared as a relevant case in \cref{ex:domn=2}. Thus, in that case,
	\begin{equation}
		\Gamma_{2,2} = \{((2^2), (3k-1-p,p)): p\ge -1, 3k\ge 2p+1\}. 
	\end{equation}

	The other relevant pair in \cref{ex:domn=2} was $(j,k)=(0,1)$. Thus, let us describe $\Gamma_{0,1}$ for $n=2$. In this case,
	\begin{equation*}
		\mathcal S_{0,1} = \{1,2\}, \ \ p_{0,1}(1) = 0, \ \ p_{0,1}(2) = 2, \ \ \bs m(1) = (1^2), \ \ \bs m(2) = (2,1),
	\end{equation*}
	so 
	\begin{equation*}
		\mathcal O(1) = \{(1^2)\} \ \ \text{and}\ \ \mathcal O(2) = \{(2,1), (1,2)\}. 
	\end{equation*}
	Then,
	\begin{equation}\label{e:p012}
		\begin{aligned}
			& ((1^2),\bs p)\in\Gamma_{0,1} &&\quad\Leftrightarrow\quad \ \bs p = (3k-p,p) \ \ \text{with}\ \ p\ge 0, 3k\ge 2p, \\
			& ((2,1),\bs p)\in\Gamma_{0,1} &&\quad\Leftrightarrow\quad \ \bs p = (3k+1-p,p) \ \ \text{with}\ \ p\ge 0, 3k\ge 2p-1, \\
			& ((1,2),\bs p)\in\Gamma_{0,1} & &\quad\Leftrightarrow\quad \ \bs p = (3k+1-p,p) \ \ \text{with}\ \ p\ge -1, 3k\ge 2p-1.
		\end{aligned}
	\end{equation}
	The corresponding elements $\bs a(\bs m,\bs p)$ are, respectively, 
	\begin{equation}\label{e:a012}
		(2(3k-p)+1,2p+1), \quad (2(3k+2-p),2p+1), \quad (2(3k+1-p)+1,2(p+1)).
	\end{equation}
	\endd
\end{ex}

For $\bs a\in\mathbb R^n$, set 
\begin{equation}\label{e:fpart}
	f(\bs a)=\frac{1}{n+1}\left(n\sum_{i\in I}a_i^2 - 2\sum_{1\le i<j\le n} a_ia_j\right).
\end{equation}
One easily checks $f$ is a positive definite quadratic form. Let $\pi:\mathbb R^{n+1}\to\mathbb R^n$ be the linear map whose kernel is spanned by $\bs e_0$ and set
\begin{equation}
	\varpi_i =\sum_{j=1}^i \pi(\bs e_j)\in\mathbb R^n \quad\text{for}\quad i\in\hat I. 
\end{equation}
In particular, $\varpi_0=0$ and 
\begin{equation}\label{e:normfundi}
	f(\varpi_i) = \frac{i(n+1-i)}{n+1}  \quad\text{for all}\quad i\in\hat I.
\end{equation} 
We are ready to state the main result of this section. Recall \eqref{e:multpart}.

\begin{thm}\label{t:maincomb}
	Let $i\in\hat I$ and $\bs\eta\in\mathbb P_i^+$ and suppose $\bs\eta'=\bs e_j+\bs e_k$ with $j,k\in\hat I$. Then, 
	\begin{equation}\label{e:maincomb}
		\tau_i^n(\bs\eta) = \sum_{(\bs m,\bs p)\in\Gamma_{j,k}} \rho_{\bs b(\bs m,\bs p)} \left( f_{i,\bs\eta}(\bs a(\bs m,\bs p))\right),
	\end{equation}
	where 
	\begin{equation}\label{e:formcomb}
		f_{i,\bs\eta}(\bs a) = \frac{f(\varpi_i)}{2} - \frac{f(\varpi_j+\varpi_k)}{4} +\eta_0-\frac{f(\bs a)}{4}
	\end{equation}
	and $\bs b(\bs m,\bs p) = (b_1,\dots,b_n)$ with
	\begin{equation}\label{e:boundcomb}
		b_r = p_r-p_{r+1} + \frac{m_r-m_{r+1}-|m_r-m_{r+1}|}{2} \ \ \text{for}\ \ 1\le r\le n
	\end{equation}
	after setting $p_{n+1}:=-1$ and $m_{n+1}:=2$.\endd	
\end{thm}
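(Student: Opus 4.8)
The plan is to obtain \eqref{e:maincomb} by chaining together the three ``coordinate systems'' that the paper sets up: the Misra--Wilson tableaux side, the representation-theoretic side (tensor product multiplicities $[V(\Lambda_0)\otimes V(\Lambda_i):V(\xi)]$), and the partition-counting side. The first link is the already-recorded identity \eqref{e:mwint}: $\tau^n_i(\eta) = [V(\Lambda_0)\otimes V(\Lambda_i):V(\xi)]$ after translating $\bs\eta\in\mathbb P_i^+$ into an affine weight $\eta = \Lambda_0 + \Lambda_i - \xi$, which in turn determines $\xi\in\hat P^+$ of level $2$. The condition \eqref{e:domceta} on $\bs\eta$ and the consequence \eqref{e:xi=jk} (i.e.\ $\bs\eta' = \bs e_j + \bs e_k$ with $j+k\equiv i$) are exactly what guarantees $\xi$ is dominant, and the pair $(j,k)$ records which two simple-root directions appear in $\eta$; concretely $\xi = \Lambda_j + \Lambda_k - (\text{something})\,\delta$ up to the null root, so the level of $\xi$ is $2$ and $\Lambda = \Lambda_i$ has level $1$. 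I would spell out this dictionary carefully at the start, since the rest is bookkeeping relative to it.

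The second link is \cref{t:typeAnpart}, i.e.\ \eqref{e:ompi}: $[V(\Lambda_0)\otimes V(\Lambda_i):V(\xi)] = \sum_{\mu\in P^-_\xi} \rho_{\bs\mu_0}(f_{i,\xi}(\mu))$. So the entire content of \cref{t:maincomb} is to take the abstract indexing set $P^-_\xi$ — a certain subset of the affine Weyl group orbit of $\xi$ — and replace it by the explicit combinatorial parametrization $\Gamma_{j,k}\subseteq \mathscr P^+$, while simultaneously tracking how the data $\bs\mu_0$, the quadratic function $f_{i,\xi}$, and its argument $\mu$ transport across that parametrization. The key input here is the characterization of level-$2$ affine Weyl group orbits promised in \cref{ss:orbits}, together with the combinatorial description of $P^-_\xi$ referred to as \eqref{e:Gxipart'}. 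The plan is: (1) invoke that description to identify $P^-_\xi$ with the set of pairs $(\bs m,\bs p)\in\Gamma_{j,k}$ — the parameter $\bs m\in\mathcal O(s)$ encoding which Weyl-chamber face $\mu$ sits on (equivalently, the ``socle'' data, cf.\ the $\bs m(s) = (2^{s-1},1^{n+1-s})$ of \eqref{e:ms2} and the residue condition \eqref{e:ps2} pinning down the correct coset), and $\bs p$ encoding the ``depth'' along the imaginary direction; (2) under this identification, show that $\bs a(\bs m,\bs p)$ of \eqref{e:acomb} is precisely the tuple of coordinates of $\mu$ in the $\varpi$-basis (so that $\bs a(\bs m,\bs p)\in\mathcal P^n$ is exactly the dominance/ordering condition defining $\mathscr P^+$), that $\bs b(\bs m,\bs p)$ of \eqref{e:boundcomb} equals $\bs\mu_0$, and that $f_{i,\xi}(\mu)$ equals $f_{i,\bs\eta}(\bs a(\bs m,\bs p))$ as given by \eqref{e:formcomb}. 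Step (2) is where the quadratic form $f$ of \eqref{e:fpart}, the identity \eqref{e:normfundi}, and the level/grading shift $\eta_0$ all get pinned down: $f_{i,\xi}(\mu)$ is built from $\frac12 f(\varpi_i)$ (the contribution of $\Lambda_i$), a $-\frac14 f(\varpi_j+\varpi_k)$ term (the contribution of $\xi$'s finite part), the grading shift $\eta_0$, and $-\frac14 f(\bs a)$ (the contribution of the weight $\mu$ itself); matching this with $f_{i,\xi}$ from \eqref{e:formwt} is a direct substitution once the dictionary of the first paragraph is fixed.

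Concretely, the steps in order: (i) translate $\bs\eta\leftrightarrow \xi$ and record $\Lambda = \Lambda_i$, level $2$, and the meaning of $(j,k)$; (ii) quote \cref{t:typeAnpart} to reduce to a statement purely about $P^-_\xi$; (iii) apply the orbit characterization of \cref{ss:orbits}/\eqref{e:Gxipart'} to biject $P^-_\xi \xleftrightarrow{\ \sim\ } \Gamma_{j,k}$, being careful about the residue bookkeeping in \eqref{e:parpar2}--\eqref{e:ps2} (this is where $\mathcal S_{j,k}$ and $p_{j,k}(s)$ come from — they select exactly the orbit elements whose level is $2$ and whose associated weight is dominant); (iv) under this bijection, verify the three equalities $\mu\text{-coords} = \bs a(\bs m,\bs p)$, $\bs\mu_0 = \bs b(\bs m,\bs p)$, and $f_{i,\xi}(\mu) = f_{i,\bs\eta}(\bs a(\bs m,\bs p))$ by direct computation; (v) assemble. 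The main obstacle I expect is step (iii): making the bijection $P^-_\xi \leftrightarrow \Gamma_{j,k}$ genuinely explicit, because $P^-_\xi$ is defined via the affine Weyl group action and one has to convert Weyl-group combinatorics (reflections, the affine wall arrangement at level $2$) into the concrete inequalities \eqref{e:pfordomcomb} and the residue conditions of \eqref{e:Gjk}. In particular, correctly identifying which $s\in[1,n+1]$ arises — i.e.\ why $s\equiv\pm|j-k|$ is forced and why $\bs m$ must lie in the $\Sigma_n$-orbit $\mathcal O(s)$ of $(2^{s-1},1^{n+1-s})$ rather than some other partition with entries in $\{1,2\}$ — is the delicate point, and it is exactly the content of \cref{ex:sup2soc} that the paper says feeds into \cref{ss:maincomb}. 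Everything after the bijection is a (somewhat lengthy but routine) quadratic-form computation, so I would organize the write-up to isolate step (iii) as the crux and treat (iv) as a verification lemma.
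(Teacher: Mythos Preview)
Your proposal is correct and follows essentially the same route as the paper: the proof in \cref{ss:maincomb} chains \eqref{e:MWtm} with \cref{t:typeAnpart}, translates $\bs\eta\leftrightarrow\xi$ (with $\eta_0=-\xi(d)$ and $\xi\equiv\Lambda_j+\Lambda_k$), invokes the bijection \eqref{e:Gxipart'} specialized via \cref{ex:sup2soc} at $a=b=1$ to identify $\Gamma_\xi\cong\Gamma'_\xi=\Gamma_{j,k}$, and then verifies the three equalities $\bs\mu_0=\bs b(\bs m,\bs p)$, $(\mu,\mu)=f(\bs a)$ (via \eqref{e:(i,j)A}), and $f_{i,\xi}(\mu)=f_{i,\bs\eta}(\bs a(\bs m,\bs p))$ exactly as you outline. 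One small point of notation: the sum in \cref{t:typeAnpart} is over $\Gamma_\xi\subseteq P^+$ rather than $P^-_\xi$, but since these are in natural bijection this does not affect your argument.
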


Since $f$ is positive definite, there are only finitely many pairs $(\bs m,\bs p)$ in \eqref{e:maincomb} for which 
\begin{equation*}
	f_{i,\bs\eta}(\bs a(\bs m,\bs p))\in\mathbb Z_{\ge 0},
\end{equation*}
which implies
\begin{equation}\label{e:cbcomb}
	f(\bs a(\bs m,\bs p))\le 2f(\varpi_i) - f(\varpi_j+\varpi_k) +4\eta_0.
\end{equation}
Thus, the summation in \eqref{e:maincomb} is actually the finite sum over the pairs $(\bs m,\bs p)\in\Gamma_{j,k}$ satisfying \eqref{e:cbcomb}. The formula \eqref{t:maincomb} is a counting matching between the set of MW $i$-tableaux with content character $\bs\eta$ and the disjoint union of a certain family of sets of multipartitions indexed by $\Gamma_{j,k}$.  \Cref{t:maincomb} will be proved in \cref{ss:maincomb} as a corollary of the fact that both sides of \eqref{e:maincomb} are equal to the multiplicity of a certain simple module of the affine Ka-Moody algebra of type $A_n^{(1)}$ inside a tensor product of fundamental modules.  \Cref{t:maincomb} is also  a generalization of  \cite[Equation (2.6.4)]{jamo:exf} since it recovers that identity when $n=1$. 

\begin{ex}\label{ex:counting}
	Let us compute the right hand side of \eqref{e:maincomb} in the case $j=k$. In this case, it follows from \eqref{e:normfundi} that 
	\begin{equation*}
		\frac{f(\varpi_i)}{2} - \frac{f(2\varpi_j)}{4} = \frac{i(n+1-i)}{2(n+1)} - \frac{j(n+1-j)}{(n+1)},
	\end{equation*}
	while \eqref{e:xi=jk} implies $2j\equiv i$. We also have to compute $f$ on the elements $\bs a$ described in \eqref{e:aj=k}:
	\begin{equation}\label{e:faj=k}
		(n+1)f(\bs a) = 4n((3k-j+2-p)^2+(p+1)^2) - 8(3k-j+2-p)(p+1). 
	\end{equation}
	Now, let us specialize to the context of \cref{ex:domn=2}, so $n=2=j, i=1$, and $\bs\eta=(\eta,\eta,\eta-1), \eta\ge 1$. We get
	\begin{equation}
		\begin{aligned}
			f_{1,\bs\eta}(\bs a) & = \frac{1}{3} - \frac{2}{3} - \frac{2((3k-p)^2+(p+1)^2) - 2(3k-p)(p+1)}{3} +\eta\\
			& = \eta-1 - 2(3k^2-3kp-k+p^2+p).
		\end{aligned}
	\end{equation}
	Finally, the corresponding elements described in \eqref{e:boundcomb} are
	\begin{equation*}
		(3k-2p-1,p+1).
	\end{equation*}
	Hence,  \eqref{e:maincomb} becomes
	\begin{equation}\label{e:maincomb22}
	 \tau^2_1(\eta,\eta,\eta-1)= \sum_{p\ge -1}\sum_{k\ge \lceil\frac{2p+1}{3}\rceil}  \rho_{ (3k-2p-1,p+1)} \left( \eta-1 - 2(3k^2-3kp-k+p^2+p)\right). 
	\end{equation}
	For instance, if one chooses $\eta=6$, it is not difficult to check that the only pairs $(p,k)$ satisfying \eqref{e:cbcomb} are $(-1,0), (0,1)$, and $(1,1)$, so
	\begin{equation}
		\tau^2_1(6,6,5)= \rho_{(1,0)}(5) + \rho_{(2,1)}(1)+\rho_{(0,2)}(3) = 1+2+2=5. 
	\end{equation}
	In other words, for $n=2$, the number of MW $1$-tableaux with 6 boxes with content $0$, 6 boxes with content $1$, and 5 boxes with content $2$ is 5. One can check the 5 corresponding partitions are: 
	\begin{align*}
	    (15,2), \quad (12,5), \quad (9,8), \quad (9,3^2,1^2), \quad \text{and} \quad (6,5,4,1^2).
	\end{align*}  
	A similar formula for $\tau_1^2(\eta,\eta,\eta)$ can be obtained by working with the elements described in \eqref{e:p012} and \eqref{e:a012}.                          
\endd
\end{ex}

\section{Tensor Product Decompositions}\label{s:tpd}

\subsection{Finite Type Cartan Data}

Let $\lie g$ be a simply laced finite dimensional simple Lie algebra over $\mathbb C$ with a fixed triangular decomposition $\lie g=\lie n^-\oplus\lie h\oplus\lie n^+$. Denote by $R$, $R^+$, and $\Delta$ the sets of roots, positive roots and simple roots, respectively. We let $I$ denote the set of vertices of the Dynkin diagram of $\lie g$ and, given  $i\in I$,  let $\alpha_i$ and $\omega_i$, denote the corresponding simple root and fundamental weight, respectively. 
Fix a Chevalley basis $\{x_\alpha^\pm, h_i:\alpha\in R^+,i\in I\}$ and set $h_\alpha = [x_{\alpha}^+,x_{\alpha}^-]$. In particular,
\begin{equation*}
	h_i=[x_{\alpha_i}^+,x_{\alpha_i}^-] \quad\text{for all}\quad i\in I.
\end{equation*} 
Let $P$ and $Q$ denote the weight and root lattices of $\lie g$, respectively, while $P^+$ and $Q^+$ are the submonoids generated by the fundamental weights and the simple roots, respectively. The highest root of $\lie g$ will be denoted by $\theta$. We normalize the corresponding symmetric bilinear form $(\ ,\ )$ such that $(\alpha,\alpha)=2$ for $\alpha\in R$. 
Consider also the linear operator $s_\alpha$ on $\lie h^*$ defined by
\begin{align*}
	s_\alpha\lambda=\lambda - (\lambda, \alpha) \alpha = \lambda - \lambda (h_\alpha) \alpha, \quad \alpha\in R^+,  \lambda\in\lie h^*,
\end{align*}
known as the reflection associated to $\alpha$.
We often simplify notation and write $x_i^\pm=x_{\alpha_i}^\pm, s_i=s_{\alpha_i}, i\in I$.  
Let $\mathcal W$ be the Weyl group of $\lie g$, which is a Coxeter group generated by the simple reflections $s_i, i \in I$. In particular, $s_\alpha\in\mathcal W$ for all $\alpha\in R^+$. 
Recall $\mathcal W$ has a unique longest element, which we denote by $w_o$.

\subsection{Affine Cartan Data}\label{ss:acd}

Given a Lie algebra $\lie a$ over $\mathbb C$, denote by $\lie a[t]$ the associated current algebra, i.e., $\lie a[t]=\lie a\otimes \mathbb C[t]$ with bracket $[x\otimes f,y\otimes g] = [x,y]\otimes (fg)$ for $x,y\in\lie a$ and $f,g\in\mathbb C[t]$. Set also  $\lie a[t]_+= \lie a\otimes t\mathbb C[t]$. The affine Kac-Moody algebra $\hlie g$ associated to $\lie g$ is the vector space $\hlie g=\lie g\otimes\mathbb C[t,t^{-1}]\oplus \mathbb Cc\oplus \mathbb Cd$ with bracket given by
\begin{equation*}
	[x \otimes t^r, y \otimes t^s] = [x, y] \otimes t^{r+s} + r\ \delta_{r,-s}\ (x, y)\ c, \quad [c,\hlie g]=\{0\}, \quad\text{and}\quad [d,x\otimes t^r]=r\ x\otimes t^r
\end{equation*}
for any $x,y \in \lie g$ and $r,s \in \mathbb Z$. 
We identify $\lie g$ and $\lie g[t]$ with the obvious subalgebras of $\hlie g$. Set
\begin{equation*}
	\hlie h = \lie h \oplus \mathbb C c \oplus \mathbb C d, \quad \hlie n^{+} = \lie n^{+} \oplus \lie g [t]_{+}, \quad\text{and}\quad \hlie b = \hlie n^{+} \oplus \hlie h.
\end{equation*}
Recall $\lie g[t]\oplus\mathbb Cc\oplus\mathbb Cd$ is a parabolic subalgebra of $\hlie g$ containing $\hlie b$. Identify $\lie h^*$ with the subspace $\{\lambda\in\hlie h^*:\lambda(c)=\lambda(d)=0\}$ and let $\Lambda_0,\delta\in\hlie h^*$ be defined by
\begin{equation*}
	\Lambda_0(d)=0=\Lambda_0(\lie h), \quad \Lambda_0(c)=1,\quad \delta(c)=0=\delta(\lie h), \quad \delta(d)=1.
\end{equation*}
Also, set $\hat I=I\sqcup\{0\}, \alpha_0=\delta-\theta, h_0=c-h_\theta$, and, for $i\in I$, set
\begin{equation} \label{e:Lambdaidef}
	\Lambda_i = \omega_i +\omega_i(h_\theta)\Lambda_0.
\end{equation}
Then, $\Lambda_i(h_j)=\delta_{i,j}$ for all $i,j\in\hat I, \{h_i:i\in\hat I\}\cup\{d\}$ is a basis of $\hlie h$, $\hat\Delta=\{\alpha_i:i\in\hat I\}$ is the set of simple roots for $\hlie g$, and $\hat R^+ = R^+\cup\{\alpha+r\delta: \alpha\in R\cup\{0\}, r\in\mathbb Z_{>0}\}$ is the set of positive roots. The affine root lattice $\hat Q$ is the $\mathbb Z$-span of $\hat\Delta$, while $\hat Q^+$ is the corresponding submonoid. 
We consider the usual partial order on $\hlie h^*$ defined by
\begin{equation*}
	\mu\le\lambda \quad\text{if}\quad \lambda-\mu\in \hat Q^+.
\end{equation*}
We also set
\begin{equation}\label{e:moddelta}
	\lambda\equiv\mu \quad\text{if}\quad \lambda-\mu\in\mathbb C\delta. 
\end{equation}
The symmetric bilinear form on $\lie h^*$ can be extended to one on $\hlie h ^*$ given by 
\begin{gather}\label{e:aform}
	(\alpha_i, \Lambda_0) = (\alpha_i,\delta)=  (\Lambda_0,\Lambda_0) = (\delta,\delta)=0, \quad \text{for} \quad i \in I, \quad\text{and}\quad (\Lambda_0,\delta)=1.
\end{gather}

Set $\hat P = \{\lambda\in\hlie h^*: \lambda(h_i)\in\mathbb Z \text{ for all } i\in\hat I\}$, $\hat P^+ = \{\lambda\in\hat P: \lambda(h_i)\ge 0 \text{ for all } i\in\hat I\}$, and note we have a direct sum of $\mathbb Z$-modules
\begin{equation}\label{e:hatPdec}
	\hat P  = P\oplus \mathbb Z\Lambda_0\oplus \mathbb C\delta 
\end{equation}
Given $\lambda\in\hlie h^*$, the number $\lambda(c)$ is called the level of $\lambda$ and we shall refer to $\lambda(d)$ as the degree of $\lambda$. Denote by $\bar\lambda\in P$ the projection of $\lambda$ on the first summand of the decomposition  \eqref{e:hatPdec}. Thus,
\begin{equation}\label{e:awdec}
	\lambda = \bar\lambda + \lambda(c)\Lambda_0 + \lambda(d)\delta 
\end{equation}
is the decomposition of $\lambda$ according to  \eqref{e:hatPdec}. Since $P\subseteq \mathbb QQ$, it follows from \eqref{e:aform} that
\begin{equation}\label{e:normlambda}
	(\lambda,\lambda) = (\bar\lambda,\bar\lambda) + 2\lambda(c)\lambda(d), \quad \lambda(c) = (\lambda,\delta), \quad\text{and}\quad \lambda(d) = (\lambda,\Lambda_0).
\end{equation}

The affine simple reflections $s_i, i\in\hat I$, can be defined as the linear endomorphism on $\hlie h^*$ such that
\begin{equation*}
	s_i\lambda = \lambda-\lambda(h_i)\alpha_i = \lambda-(\lambda,\alpha_i)\alpha_i \quad\text{for all}\quad \lambda\in\hat P.
\end{equation*}
For $i\ne 0$, the restriction of $s_i$ to $\lie h$ coincides with the previously defined simple reflections. Let
$\widehat{\mathcal W}$ denote the  affine Weyl group, which is generated by the simple reflections $s_i,i\in\hat I$. In particular, $\mathcal W$ is naturally a subgroup of $\widehat{\mathcal W}$ and
\begin{equation*}
	(w\lambda,w\mu) = (\lambda,\mu) \quad\text{for all}\quad w \in \widehat{\mathcal W}, \lambda,\mu\in\hlie h^*.
\end{equation*}
Moreover, since $\alpha_i(c)=0$ for all $i\in\hat I$, it follows that
\begin{equation*}
	(w\lambda)(c) = \lambda(c) \quad\text{for all}\quad w\in\widehat{\mathcal W}, \lambda\in\hlie h^*. 
\end{equation*}
These facts, together with \eqref{e:normlambda}, imply
\begin{align}\label{e:degorbit}
	\mu \in \widehat{\mathcal W} \lambda \quad\Rightarrow\quad 2\lambda(c)(\mu(d)-\lambda(d))=(\bar\lambda,\bar\lambda)-(\bar\mu,\bar\mu).
\end{align}
Indeed, 
\begin{align*}
	(\bar\mu,\bar\mu) + 2\lambda(c)\mu(d) =  (\bar\mu,\bar\mu) + 2\mu(c)\mu(d) = (\mu,\mu) =(\lambda,\lambda) = (\bar\lambda,\bar\lambda) + 2\lambda(c)\lambda(d).
\end{align*}
It is also well-known that
\begin{equation}\label{e:domorbit}
	\xi\in \widehat{\mathcal W}\hat P^+\setminus\mathbb C\delta \quad\Leftrightarrow\quad \xi\in\hat P \ \ \text{and}\ \ \xi(c)\in\mathbb Z_{>0},
\end{equation}
and, furthermore, 
\begin{equation}\label{e:domorbitu}
	\#(\widehat{\mathcal W}\xi \cap\hat P^+)\le 1 \quad\text{for all}\quad \xi\in\hlie h^*.  
\end{equation}
We denote by $\soc(\xi)$ the unique element of $\widehat{\mathcal W}\xi \cap\hat P^+$ when it exists. It follows from \eqref{e:degorbit} that
\begin{equation}\label{e:socdeg}
	\soc(\xi)(d) = \frac{(\xi,\xi)- (\overline{\soc(\xi)},\overline{\soc(\xi)}) }{2\xi(c)} = \xi(d) + \frac{ (\bar\xi,\bar\xi)- (\overline{\soc(\xi)},\overline{\soc(\xi)}) }{2\xi(c)}.
\end{equation}
Note \eqref{e:domorbitu} implies
\begin{equation}\label{e:socequiv}
	w\xi \equiv \lambda\in\hat P^+ \quad\Rightarrow\quad \soc(\xi)\equiv \lambda.
\end{equation}
In that case $\overline{\soc(\xi)}=\bar\lambda$ and the degree of $\soc(\xi)$ is given by \eqref{e:socdeg}. Thus, the task of finding $\soc(\xi)$ reduces to that of finding $\overline{\soc(\xi)}$.

It will be convenient to work with the alternative realization of  $\widehat{\mathcal W}$ as the semidirect product $\tau(Q) \ltimes \mathcal{W}$, where  $\tau$ is the map defined on $\hlie h^*$ which associates to each $\alpha\in\hlie h^*$ the linear map $t_\alpha:\hlie h^* \to \hlie h^*$ given by
\begin{align*}
	t_\alpha(\lambda)= \lambda + \lambda(c)\alpha - \left((\lambda,\alpha)+\frac{1}{2}(\alpha,\alpha)\lambda(c)\right)\delta,
\end{align*}
or, equivalently, 
\begin{align}\label{e:areflex}
	t_\alpha (\Lambda_0) = \Lambda_0 + \alpha - \frac{1}{2} (\alpha,\alpha) \delta, \qquad 
	t_\alpha (\omega_i) = \omega_i - (\omega_i,\alpha)\delta, \quad t_\alpha (\delta) = \delta, \qquad \alpha\in \hlie h^*, i \in I.
\end{align} 
The isomorphism $\widehat{\mathcal W}\to \tau(Q^\vee) \ltimes \mathcal{W}$ is given by $s_i\mapsto t_0s_i$ for $i\in I$ and $s_0 \mapsto t_\theta s_\theta$.
We keep the notation $s_i$ for the image of $s_i$ under this isomorphism.

\subsection{Basic Representation Theory Background}
If $V$ is a $\lie g$-module and $\mu\in\lie h^*$, the corresponding weight space is
\begin{equation*}
	V_\mu = \{v\in V: hv = \mu(h)v \ \forall\ h\in\lie h\}.
\end{equation*}
We shall only consider finite-dimensional $\lie g$-modules. In particular, 
\begin{equation*}
	V=\bigoplus_{\mu\in P} V_\mu \quad\text{and}\quad \dim V_\mu\in\mathbb Z.
\end{equation*}
The set $\wt(V) = \{\mu\in P:V_\mu\ne  0\}$ is the set of weights of $V$.
The character of $V$, denoted by $\ch V$, is the function $\ch V: P\to \mathbb Z, \mu\mapsto \dim V_\mu$. It can be naturally identified with the element
\begin{equation*}
	\sum_{\mu\in P} \dim V_\mu\ e^\mu \in \mathbb Z[P],
\end{equation*}
where $e^\mu$ denotes the element corresponding to $\mu$ in the group ring $\mathbb Z[P]$. It turns out $w\,\ch V = \ch V$, i.e., $\dim V_\mu = \dim V_{w\mu}$, for all $w\in\mathcal W$. 
We denote by $V(\lambda),\lambda\in P^+$, a simple $\lie g$-module with highest $\lambda$. 

These notions can be defined for a $\hlie g$-module in the category of integral modules in category $\mathcal O$. In particular, given such $\hlie g$-module and $\mu\in\hat P^+$, we let $V_\mu$ denote the associated weight space,  we denote by $\ch V$ the associated character, the characters are $\widehat{\mathcal W}$-invariant, the isomorphism classes of simple modules are in bijection with $\hat P^+$, and we denote by $V(\lambda),\lambda\in\hat P^+$, a simple module with highest-weight $\lambda$. 

Given a $\hlie g$-module $V$ such that
\begin{equation*}
	V \cong \bigoplus_{\lambda\in\hat P^+} V(\lambda)^{\oplus m_\lambda} \quad\text{for some}\quad m_\lambda\in\mathbb Z_{\ge 0},
\end{equation*}
we set
\begin{equation*}
	[V:V(\lambda)] = m_\lambda.
\end{equation*}
The number $[V:V(\lambda)]$ is often called the outer multiplicity of $\lambda$ in $V$, as opposed to the ``inner'' multiplicity which is $\dim V_\lambda$.  

The current algebra $\lie g [t]$ has a $\mathbb Z_{\ge 0}$-grading in the obvious way, which induces a grading on $U(\lie g[t])$ in such a way that an element
\begin{align*}
	(x_1 \otimes t_{r_1}) \cdots (x_s\otimes t_{r_s}), \quad  x_j \in \lie g, r_j \in \mathbb Z_{\geq 0}, 1 \leq j \leq s,
\end{align*}
has degree $r_1 + \cdots  + r_s$. Denote by $U(\lie g[t])_k$ the homogeneous component of degree $k$ and recall that it is a $\lie g$–module for all $k \in \mathbb Z_{\ge 0}$. A finite-dimensional $\mathbb Z$-graded $\lie g[t]$-module is a $\mathbb Z$-graded vector space $V$ admitting a compatible graded action of $\lie g[t]$. That is,
\begin{align*}
	V = \bigoplus_{k \in \mathbb Z} V[k] \quad\text{and}\quad (x \otimes t^r) V[k] \subseteq V[k+r] \ \ \text{for all}\ \ x\in\lie g, r\in\mathbb Z_{\ge 0}, k\in\mathbb Z.
\end{align*}
In particular, each graded piece $V [k]$ is a $\lie g$-module. If $\dim V [k] <\infty $ for all $k \in \mathbb Z$, we define the graded character as a formal sum
\begin{align*}
	\chgr V = \sum_{k \in \mathbb Z} \ch V[k] q^k \in P[\mathbb Z][[q]]. 
\end{align*}
Given a $\mathbb Z$-graded space $V$, let $\tau_p V$ be the graded space whose $r$-th graded piece is  $V [r - p]$. So
\begin{equation*}
	\chgr\tau_pV = q^p\chgr V. 
\end{equation*}
Given a $\lie g$-module $V$, we shall regard it as a $\mathbb Z$-graded $\lie g[t]$-module by setting
\begin{equation}\label{e:ev0def}
	V[0]=V \quad\text{and}\quad \lie g[t]_+V=0. 
\end{equation}

\subsection{Demazure Modules and Flags}\label{ss:dem}
The $\widehat{\mathcal W}$-invariance of $\ch V(\Lambda)$ implies $\dim V(\Lambda)_{w\Lambda}=1$ for all $\Lambda\in\hat P^+, w\in \widehat{\mathcal W}$. The Demazure module $V_w(\Lambda)$ associated to $\Lambda\in\hat P^+$ and $w\in \widehat{\mathcal W}$ is the $\hlie b$-submodule of $V(\Lambda)$ generated by $V(\Lambda)_{w\Lambda}$. It is usually denoted by $V_w(\lambda)$. 
Alternatively, given $\xi\in\widehat{\mathcal W}\hat P^+$, letting $\Lambda$ be the unique element in $\widehat{\mathcal W}\xi\cap\hat P^+$ and $w\in\widehat{\mathcal W}$ be such that $w\Lambda=\xi$, we have
\begin{equation*}
	V_w(\Lambda) = U(\hlie b)V(\Lambda)_\xi. 
\end{equation*}
This motivates the following alternative notation:
\begin{equation}\label{e:Demalt}
	D(\xi): = V_w(\Lambda). 
\end{equation}
Since $\hlie h^*\subseteq\hlie b$, $D(\xi)$ has a decompositon as a sum of affine weight spaces. Recall the definition of $\soc(\xi)$ in the line preceding \eqref{e:socdeg}. It follows from the definitions that  
\begin{equation*}
	\soc(\xi) = \Lambda \quad\text{and}\quad D(\xi)_{\soc(\xi)}=V(\Lambda)_\Lambda.
\end{equation*}

Let $\preccurlyeq$ denote the Bruhat order in $\widehat{\mathcal W}$. It is well-known (see \cite[Lemmas 1.3.20 and 8.3.3]{kumar}) that $(\widehat{\mathcal W},\preceq)$ is a directed poset,
\begin{gather}\notag 
	w\preceq w' \quad\Rightarrow\quad V_w(\Lambda)\subseteq V_{w'}(\Lambda),\\ \label{e:BruhatDem}\text{and}\\ \notag
	V(\Lambda) = \bigcup_{w\in \widehat{\mathcal W}} V_w(\Lambda).
\end{gather}
Since every element of $P$ is $\mathcal W$-conjugate to an element of $-P^+$, one easily sees that, for every $w\in\widehat{\mathcal W}$, there exists $w'\in{\mathcal W}$ such that
\begin{equation*}
	\ell(w'w)=\ell(w)+\ell(w') \qquad\text{and}\qquad w'w\Lambda(h_i)\le 0 \quad\text{for all}\quad i\in I.
\end{equation*}
Thus, if we set
\begin{equation*}
	\widehat{\mathcal W}_\Lambda^- = \left\{w\in\widehat{\mathcal W}: \overline{w\Lambda}\in -P^+\right\},
\end{equation*}
we get
\begin{equation}\label{e:demunion}
	V(\Lambda) = \bigcup_{w\in {\widehat{\mathcal W}}_\Lambda^-} V_w(\Lambda).
\end{equation}
The Demazure modules of the form $V_w(\Lambda)$ with $w\in {\widehat{\mathcal W}}_\Lambda^-$ are usually called $\lie g$-stable Demazure module due to the following well-known lemma.

\begin{lem}\label{l:gstdem}
	Let $\Lambda\in\hat P^+$ and $w\in \widehat{\mathcal W}$. The following are equivalent:
	\begin{enumerate}[(i)]
		\item $V_w(\Lambda)$ is a $\lie g[t]$-submodule of $V(\Lambda)$.
		\item $V_w(\Lambda)$ is a $\lie g$-submodule of $V(\Lambda)$.
		\item $\lie n^-V_w(\Lambda)_{w\Lambda} = 0$.
		\item $w\Lambda(h_i)\le 0$ for all $i\in I$.
	\end{enumerate}
Moreover, in that case, $V_w(\Lambda)$ is a finite-dimensional $\mathbb Z$-graded $\lie g[t]$-module and $U(\lie n^+)V(\Lambda)_{w\Lambda}$ is isomorphic to $V(\lambda)$ as a $\lie g$-module, where $\lambda=-w_o(\overline{w\Lambda})$.\qed
\end{lem}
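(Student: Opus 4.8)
The plan is to prove (i)$\Rightarrow$(ii)$\Rightarrow$(iii)$\Rightarrow$(iv), the converse (iv)$\Rightarrow$(iii), and (iii)$\Rightarrow$(i); all of these are short except (iv)$\Rightarrow$(iii), which carries the content. Write $v=v_{w\Lambda}$ for a spanning vector of the one-dimensional space $V(\Lambda)_{w\Lambda}$; since $\hlie h$ acts on it by scalars, $V_w(\Lambda)=U(\hlie b)v=U(\hlie n^+)v$, and as noted around \eqref{e:Demalt} this module depends only on $\xi=w\Lambda$. Also, since $\Lambda_0$ and $\delta$ vanish on $h_i$ for $i\in I$, condition (iv) says precisely $\overline{w\Lambda}\in-P^+$, i.e.\ $w\in\widehat{\mathcal W}_\Lambda^-$. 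Now (i)$\Rightarrow$(ii) is immediate from $\lie g=\lie g\otimes1\subseteq\lie g[t]$. For (ii)$\Rightarrow$(iii) one observes that $\hlie n^+$ raises weights by elements of $\hat R^+\subseteq\hat Q^+$, so every weight of $V_w(\Lambda)=U(\hlie n^+)v$ lies in $w\Lambda+\hat Q^+$; since $w\Lambda-\alpha\notin w\Lambda+\hat Q^+$ for $\alpha\in R^+$, an $\lie n^-$-stable $V_w(\Lambda)$ forces $x_\alpha^-v=0$ for all $\alpha\in R^+$, i.e.\ $\lie n^-v=0$. For (iii)$\Rightarrow$(iv), fix $i\in I$: as $x_i^-v=0$, the vector $v$ is a lowest-weight vector for the subalgebra $\lie{sl}_2^{(i)}=\langle x_i^\pm,h_i\rangle$, and integrability of $V(\Lambda)$ makes the cyclic $\lie{sl}_2^{(i)}$-submodule it generates finite-dimensional, so its lowest weight $w\Lambda(h_i)$ is $\le 0$.

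\textbf{The main step.} The implication (iv)$\Rightarrow$(iii) is where the real work lies, and I expect it to be the only genuine obstacle. Here I would appeal to the standard structure of extremal weight vectors in integrable highest-weight modules (see \cite[\S\S1.3--1.5]{kumar}, and compare the facts recalled around \eqref{e:BruhatDem}): for each $i\in\hat I$, the extremal vector $v$ is annihilated by $x_i^+$ when $w^{-1}\alpha_i>0$ and by $x_i^-$ when $w^{-1}\alpha_i<0$, and in the first case it generates the irreducible $\lie{sl}_2^{(i)}$-module of highest weight $w\Lambda(h_i)=(\Lambda,w^{-1}\alpha_i)\ge 0$. Consequently, for $i\in I$: if $w^{-1}\alpha_i<0$ then $x_i^-v=0$ directly; and if $w^{-1}\alpha_i>0$ then hypothesis (iv) forces $w\Lambda(h_i)=0$, so the $\lie{sl}_2^{(i)}$-module generated by $v$ is trivial and again $x_i^-v=0$. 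Thus $\lie n^-v=0$, which is (iii). All the weight of this step sits in the classical fact that local nilpotency of the $x_i^{\pm}$ propagates along reduced expressions, so that $v$ behaves like the highest-weight vector $v_\Lambda$ after conjugating by $w$; I would cite this rather than reprove it.

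\textbf{Closing the cycle.} For (iii)$\Rightarrow$(i): by (iii) the weight vector $v$ is annihilated by $\lie n^-$, so $W:=U(\lie n^+)v=U(\lie g)v$ is a $\lie g$-submodule of $V(\Lambda)$. As $\lie g[t]_+$ is an ideal of $\hlie n^+$ with quotient isomorphic to $\lie n^+$, PBW gives $V_w(\Lambda)=U(\hlie n^+)v=U(\lie g[t]_+)W$; since $\operatorname{ad}(\lie n^-\otimes1)$ preserves $\lie g[t]_+$, hence $U(\lie g[t]_+)$, and $W$ is $\lie n^-$-stable, the identity $y(uw')=[y,u]w'+u(yw')$ for $y\in\lie n^-$, $u\in U(\lie g[t]_+)$, $w'\in W$ shows $V_w(\Lambda)$ is $\lie n^-$-stable, hence $\lie g$-stable. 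Finally, $\lie g[t]$ is generated as a Lie algebra by $\lie g\otimes1$ and $\lie g\otimes t\subseteq\lie g[t]_+\subseteq\hlie b$ (using $[\lie g,\lie g]=\lie g$), so any subspace of $V(\Lambda)$ stable under both $\hlie b$ and $\lie g$ is stable under $\lie g[t]$; this gives (i).

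\textbf{The ``moreover'' statement.} Finite-dimensionality requires no hypothesis: the weights of $V_w(\Lambda)$ lie in $\{\mu:w\Lambda\le\mu\le\Lambda\}$, a finite set because the affine simple roots are linearly independent, and $V(\Lambda)$ has finite-dimensional weight spaces. For the grading, I would use the $d$-eigenspace decomposition shifted so that $v$ lies in degree $0$: because $[d,x\otimes t^r]=r\,x\otimes t^r$, each $\lie g\otimes t^r$ raises degree by $r$ while $\lie g\otimes1$ preserves each graded piece, so $V_w(\Lambda)$ is a nonnegatively graded $\lie g[t]$-module whose degree-$0$ component is exactly $W=U(\lie n^+)V(\Lambda)_{w\Lambda}$. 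Being a submodule of the semisimple $\lie g$-module $V(\Lambda)$, generated by the $\lie n^-$-annihilated weight vector $v$ which spans the one-dimensional weight space of weight $\overline{w\Lambda}\in-P^+$, $W$ is irreducible of lowest weight $\overline{w\Lambda}$, hence isomorphic to $V(\lambda)$ with $\lambda=-w_o(\overline{w\Lambda})$.
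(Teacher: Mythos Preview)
The paper does not supply a proof of this lemma: it is introduced as ``well-known'' and closed with \qedsymbol, which in the paper's declared conventions marks a statement whose proof is omitted. So there is no argument in the paper to compare against, and your proposal is a correct and well-organized proof of a result the authors take for granted.

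A couple of minor remarks. In (iii)$\Rightarrow$(i), once you have shown $V_w(\Lambda)$ is stable under both $\hlie b$ and $\lie g$, stability under $\lie g[t]$ follows immediately from the vector-space decomposition $\lie g[t]=\lie g\oplus\lie g[t]_+$ together with $\lie g[t]_+\subseteq\hlie n^+\subseteq\hlie b$; the generation argument is not needed. In (iv)$\Rightarrow$(iii), it is slightly cleaner to phrase the extremal-vector fact intrinsically in terms of the sign of $w\Lambda(h_i)$ rather than of $w^{-1}\alpha_i$ (the latter depends on the choice of $w$ when $\Lambda$ is not regular, though the conclusion does not), but your version is fine.

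One caveat that is about the statement rather than your proof: the formula $\lambda=-w_o(\overline{w\Lambda})$ appears to carry a stray minus sign. Your derivation correctly identifies $W=U(\lie n^+)v$ as the irreducible $\lie g$-module of lowest weight $\overline{w\Lambda}$, whose highest weight is $w_o(\overline{w\Lambda})$; and indeed the paper's own subsequent convention $D(\ell,\lambda,r)=D(\ell\Lambda_0+w_o\lambda+r\delta)$ gives $\overline{w\Lambda}=w_o\lambda$, i.e.\ $\lambda=w_o(\overline{w\Lambda})$. Your final clause simply echoes the lemma as stated, so this is a typo in the paper rather than a gap in your argument.
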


We will work only with $\lie g$-stable Demazure modules. If $D(\xi)$ is such a module, then $\xi = \ell\Lambda_0+w_o\lambda + r\delta$ for some $\ell\in\mathbb Z_{>0}, \lambda\in P^+, r\in\mathbb C$, and we use the following notation which emphasizes the data $\ell,\lambda,r$:
\begin{equation*}
	D(\ell,\lambda,r)= D(\ell\Lambda_0 +w_o\lambda + r\delta) \quad\text{and}\quad D(\ell,\lambda) = D(\ell,\lambda,0).
\end{equation*}
 It is also well-known that the socle of $D(\xi)$ when regarded as a $\lie g[t]$-module is the $\lie g$-module
\begin{equation*}
	U(\lie n^-)D(\xi)_{\soc(\xi)} = U(\lie n^-)V(\soc(\xi))_{\soc(\xi)},
\end{equation*}
which is isomorphic to $V\left(\overline{\soc(\xi)}\right)$ as a $\lie g$-module. This explains our choice of notation for $\soc(\xi)$. 

Recall that a $\lie g[t]$-module $V$ admits a ($\lie g$-stable) Demazure flag if there exist $l>0, \xi_j\in \hat P$ such that $D(\xi_j)$ is $\lie g$-stable for $j\in [1,l]$, and a sequence of inclusions
\begin{equation}\label{e:demflag}
	0 = V_0 \subset V_1 \subset \cdots \subset V_{l-1} \subset V_l = V \quad\text{with}\quad V_j/V_{j-1}\cong D(\xi_j) \ \forall\ 1\le j\le l.
\end{equation}
If $\xi_j(c)=\ell$ for some $\ell$ and all $j$, such a sequence is said to be a level-$\ell$ Demazure flag for $V$. We shall only consider flags of a fixed level. Let $\mathbb V$ be a level-$\ell$ Demazure flag for $V$ and, given a Demazure module $D$, define the multiplicity of $D$ in $\mathbb V$ by
\begin{equation*}
	[\mathbb V:D] = \#\{1\le j\le l : V_j/V_{j-1}\cong D\}.
\end{equation*}
As observed in \cite[Section 2.8]{cssw:demflag}, if $\mathbb V'$ is another level-$\ell$ Demazure flag for $V$, then $[\mathbb V':D]=[\mathbb V:D]$. Hence, by abuse of language, we shift the notation from $[\mathbb V:D]$ to $[V:D]_\ell$. Also following \cite{cssw:demflag}, we consider the generating polynomial
\begin{equation}\label{e:gsdfm}
	[V:D]_\ell(q) = \sum_{m\in\mathbb Z}\ [V:\tau_mD]_\ell\ q^m \ \in\ \mathbb Z[q,q^{-1}].
\end{equation}
When no confusion arises, we simplify notation and drop the subindex $\ell$ from the brackets above. 

\subsection{Outer Multiplicities from Demazure Flags}\label{ss:omdf}
Given $\Lambda\in\hat P^+$, consider
\begin{equation*}
	\widehat{\mathcal W}_\Lambda^- = \left\{w\in\widehat{\mathcal W}: \overline{w\Lambda}\in -P^+\right\} \quad\text{and}\quad  P^-_\Lambda = \{w\Lambda : w \in \widehat{\mathcal W}^-_{\Lambda} \}\subseteq \hat P.
\end{equation*}
Since $w\delta = \delta$ for all $w\in\widehat{\mathcal W}$,  
\begin{equation}\label{e:Gammatrans}
	P^-_{\Lambda+s\delta} = P^-_\Lambda + s\delta \quad\text{for all}\quad \Lambda\in \hat P^+, s\in\mathbb C.
\end{equation}
The main result of \cite{jamo:exf} says that, for all $\Lambda\in\hat P^+$ such that $\Lambda(c)=\ell$, we have  
	\begin{equation}\label{t:multrel}
		[V(\Lambda_0)\otimes V(\Lambda):V(\xi)] =  \sum_{ \eta\in P^-_\xi}  \max_{\gamma\in P^-_\Lambda}\ [D(\gamma): D(\eta)] \quad\text{for all}\quad  \xi\in\hat P^+, \xi(c)=\ell+1.
	\end{equation}
It will be convenient to rephrase the above expression as follows. Given $\Phi\in\hat P^+$, an element of $P^-_\Phi$ can be written as $\Phi(d)\Lambda_0+w_o\mu+r\delta$ for some $\mu\in P^+$ and $r\in\mathbb C$. According to \eqref{e:socdeg}, we have 
\begin{equation*}
	\Phi(d) = r + \frac{(w_o\mu,w_o\mu)-(\bar\Phi,\bar\Phi)}{2\Phi(c)}.
\end{equation*}
Thus, setting 
\begin{equation}\label{e:socdeg'}
	r(\mu,\Phi) =  \Phi(d) - \frac{(\mu,\mu)-(\bar\Phi,\bar\Phi)}{2\Phi(c)} \quad\text{for}\quad \mu\in P^+, \Phi\in\hat P^+,
\end{equation}
and
\begin{equation}
	\Gamma_\Phi = \left\{\mu\in P^+: \Phi(d)\Lambda_0+w_o\mu+r(\mu,\Phi)\delta\in\widehat{\mathcal W}\Phi\right\},
\end{equation}
\eqref{t:multrel} becomes
\begin{equation}\label{e:multrel}
	[V(\Lambda_0)\otimes V(\Lambda):V(\xi)] =  \sum_{\mu\in \Gamma_\xi}  \max_{\nu\in \Gamma_\Lambda}\ [D(\ell,\nu,r(\nu,\Lambda)): D(\ell+1,\mu,r(\mu,\xi))]
\end{equation}
for all $\xi\in\hat P^+, \xi(c)=\ell+1$.

The following was proved in \cite[Proposition 2.6.1]{jamo:exf} in the case $\lie g=\lie{sl}_2$ and we shall extend it for type $A$ in general. The proof is an application of \eqref{t:multrel} together with \cref{c:inccof} below, which is obtained by studying certain ``canonical'' reduced expressions for elements of $\widehat{\mathcal W}$. Henceforth, we use the notation
\begin{equation*}
	|\lambda| = \sum_{i\in I} \lambda(h_i) \quad\text{for}\quad\lambda\in P.
\end{equation*}

\begin{thm} \label{t:typeAn}
	Assume $\lie g$ is of type $A_n$, let $\Lambda\in\hat P^+$, and set $\lambda=\bar\Lambda, \ell=\Lambda(c), s=\Lambda(d)$. Then, for all $\xi\in\hat P^+$ with $\xi(c)=\ell+1$, we have
	\begin{align*}
		[V(\Lambda_0)\otimes V(\Lambda):V(\xi)] =  \sum_{\mu\in\Gamma_{\xi}}\ \lim_{k\to\infty}\  [D(\ell,\lambda + \ell k \theta ):D(\ell +1,\mu,r(\mu,\xi)-s+k(|\lambda|+\ell k)].
	\end{align*}	\endd
\end{thm}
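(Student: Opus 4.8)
The plan is to deduce \cref{t:typeAn} from the master formula \eqref{t:multrel} (equivalently \eqref{e:multrel}) by showing that, for each fixed $\mu\in\Gamma_\xi$, the maximum $\max_{\nu\in\Gamma_\Lambda}[D(\ell,\nu,r(\nu,\Lambda)):D(\ell+1,\mu,r(\mu,\xi))]$ is attained — and stabilizes — along the explicit sequence $\nu_k=\lambda+\ell k\theta$. First I would record, using \eqref{e:Gammatrans} together with the $\delta$-translation behavior of Demazure flag multiplicities, that the degree shift $r(\nu_k,\Lambda)-r(\mu,\xi)$ is the scalar $-s+k(|\lambda|+\ell k)$ appearing in the statement, so that the $k$-th term in the claimed limit is exactly $[D(\ell,\nu_k,r(\nu_k,\Lambda)):D(\ell+1,\mu,r(\mu,\xi))]$; this is a direct computation with \eqref{e:socdeg'} and \eqref{e:normlambda}, using $(\lambda+\ell k\theta,\lambda+\ell k\theta)=(\lambda,\lambda)+2\ell k(\lambda,\theta)+\ell^2k^2(\theta,\theta)$ and $(\lambda,\theta)=|\lambda|$, $(\theta,\theta)=2$ in type $A_n$. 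The point is that $\lambda+\ell k\theta$ is the $\bar{\phantom{x}}$-part of a genuine element of $P^-_\Lambda$, namely $t_{-k\theta}$ applied appropriately, so each $\nu_k$ does lie in $\Gamma_\Lambda$.

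Next I would invoke \cref{c:inccof} (the corollary on canonical reduced expressions referenced right before the theorem) to show two things: (a) the sequence $\gamma_k=\ell(s+\cdots)\Lambda_0+w_o\nu_k+\cdots\in P^-_\Lambda$ is \emph{cofinal} in the relevant sense, i.e. every $\gamma\in P^-_\Lambda$ is dominated in Demazure-inclusion order by some $\gamma_k$, and (b) the corresponding Demazure modules form an increasing chain $D(\gamma_0)\subseteq D(\gamma_1)\subseteq\cdots$ (after the appropriate degree normalization), whose union is the full integrable module attached to $\Lambda$. Granting (a)–(b), the flag multiplicity $[D(\gamma_k):D(\ell+1,\mu,r(\mu,\xi))]$ is a nondecreasing function of $k$ (each inclusion of $\lie g$-stable Demazure modules refines a level-$(\ell+1)$ Demazure flag, so multiplicities only grow), and it is bounded above by the finite number $\max_{\gamma\in P^-_\Lambda}[D(\gamma):D(\ell+1,\mu,r(\mu,\xi))]$ from \eqref{t:multrel}; cofinality forces the supremum to be achieved, hence the nondecreasing integer sequence is eventually constant and its limit equals that maximum. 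Summing over $\mu\in\Gamma_\xi$ and quoting \eqref{t:multrel} then yields the asserted identity.

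The main obstacle, and the step that really uses type $A$, is establishing the cofinality statement (a): one must exhibit, for an arbitrary $w\in\widehat{\mathcal W}_\Lambda^-$, a canonical reduced word that can be extended to a reduced word for an element of the form $t_{-k\theta}w_o$ (for $k$ large), so that $V_w(\Lambda)\subseteq V_{\gamma_k}(\Lambda)$. This is exactly where \cref{c:inccof} does the work, and where the restriction to $A_n$ enters — the translation by $\theta$ interacts cleanly with the $\lie g$-stable cone only because of the special combinatorics of the type $A$ affine Weyl group (the analogous statement for types $D,E$ being flagged in the introduction as not yet available). I would also need to check the elementary but necessary fact that the $\delta$-degrees match so that the inclusions $D(\gamma_k)\subseteq D(\gamma_{k+1})$ are honest inclusions of graded $\lie g[t]$-modules and not merely up to a degree shift; this is bookkeeping with \eqref{e:Gammatrans} and \eqref{e:socdeg'}. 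Once cofinality and monotonicity are in hand, the convergence of the limit is immediate from the fact that a bounded nondecreasing sequence of nonnegative integers stabilizes.
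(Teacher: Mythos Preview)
Your proposal is correct and follows essentially the same approach as the paper's proof: start from \eqref{e:multrel}, invoke \cref{c:inccof} to produce the increasing cofinal sequence $w_ow_k$ with $w_k=(s_0s_{n-1,1})^k$, so that the Demazure-flag multiplicities along $\gamma_k=w_ow_k\Lambda\in P^-_\Lambda$ are nondecreasing and eventually realize the maximum, then absorb the $\delta$-degree shift to obtain the displayed formula. The only cosmetic difference is that the paper computes $w_k\Lambda=\ell\Lambda_0+(\lambda+k\ell\theta)+(s-k(k\ell+|\lambda|))\delta$ by direct iteration of the action of $w_1$ rather than via the bilinear-form expansion of $(\lambda+\ell k\theta,\lambda+\ell k\theta)$ and \eqref{e:socdeg'} that you outline.
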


In the case $\ell=1$, we can make the right-hand side of the expression in \cref{t:typeAn} more explicitly as follows. Given $\mu \in P^+$, write 
\begin{equation}\label{e:mu01}
	-w_o\mu = 2\mu_0 + \mu_1 \ \ \text{with}\ \ \mu_0,\mu_1 \in P^+,\ \mu_1(h_i) \leq 1 \ \forall\ i\in I,
\end{equation}
and consider 
\begin{equation}\label{e:bsmuj}
	\bs\mu_0 = (\mu_0(h_1),\dots,\mu_0(h_n))\in\mathbb Z_{\ge 0}^n. 
\end{equation}
Also, given $i\in\hat I$ and $\xi\in\hat P^+$, set
\begin{equation}\label{e:formwt}
	f_{i,\xi}(\mu) = \frac{1}{4} (2(\omega_i,\omega_i) - (\bar\xi,\bar\xi) -4\xi(d)-(\mu,\mu)). 
\end{equation}
Note (cf. \eqref{e:cbcomb})
\begin{equation}\label{e:closedball}
	f_{i,\xi}(\mu)\ge 0 \quad\Leftrightarrow\quad (\mu,\mu)\le 2(\omega_i,\omega_i) - (\bar\xi,\bar\xi)-4\xi(d). 
\end{equation}
The following result will be proved in \cref{ss:typeAnpart}.

\begin{thm}\label{t:typeAnpart}
	If $\lie g$ is of type $A_n$, 
	\begin{align*}
		[V(\Lambda_0)\otimes V(\Lambda_i):V(\xi)] &= \sum_{\mu\in\Gamma_\xi} \rho_{\bs \mu_0}(f_{i,\xi}(\mu)) \quad\text{for all}\quad i\in\hat I,\ \xi\in\hat P^+.
	\end{align*}  
\endd
\end{thm}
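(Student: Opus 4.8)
\textbf{Proof proposal for \cref{t:typeAnpart}.}

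The plan is to specialize \cref{t:typeAn} to the case $\Lambda=\Lambda_i$ (so $\ell=1$, $\lambda=\omega_i$, $s=\xi(d)$ in the relevant cases) and then transform the limit $\lim_{k\to\infty}[D(1,\omega_i+k\theta):D(2,\mu,r(\mu,\xi)-s+k(|\omega_i|+k))]$ into the multipartition count $\rho_{\bs\mu_0}(f_{i,\xi}(\mu))$. First I would invoke the level-$2$ Demazure-flag multiplicity formula from \cite{bcsw:mdpl2} (referenced in the excerpt as \eqref{e:multflagAn}), which expresses $[D(1,\gamma,\cdot):D(2,\mu,\cdot)]$, or rather the generating polynomial $[D(1,\gamma):D(2,\mu)](q)$, as a product of $q$-binomial coefficients. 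The key point (alluded to around \eqref{e:binmpart}) is that such a product of $q$-binomials, read off as a coefficient in $q$, literally counts a set of multipartitions whose parts are bounded; the bounds are exactly the numbers $\bs\mu_0=(\mu_0(h_1),\dots,\mu_0(h_n))$ coming from the decomposition $-w_o\mu=2\mu_0+\mu_1$ in \eqref{e:mu01}. So for each fixed $k$ one rewrites $[D(1,\omega_i+k\theta):\tau_m D(2,\mu,r(\mu,\xi))]$ as a coefficient of a product of Gaussian binomials, hence as $|\mathcal P_{\bs\mu_0}(\,\cdot\,)|$ evaluated at an argument that is an explicit (quadratic in $k$) integer.

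Next I would compute the limit. Because the degree shift built into \cref{t:typeAn} is $r(\mu,\xi)-s+k(|\omega_i|+k)$, and because taking $k\to\infty$ should stabilize the flag multiplicity (this is the content of the ``stabilizing limit'' philosophy: the sequence $\gamma_k=\omega_i+k\theta$ is constructed via \cref{c:inccof} precisely so that the relevant multiplicities increase and stabilize), the limit picks out a single coefficient. I expect the argument of $\rho_{\bs\mu_0}$ in the limit to be exactly $f_{i,\xi}(\mu)=\tfrac14(2(\omega_i,\omega_i)-(\bar\xi,\bar\xi)-4\xi(d)-(\mu,\mu))$; this matches on the nose because $r(\mu,\xi)$ from \eqref{e:socdeg'} carries the term $\xi(d)-\tfrac{(\mu,\mu)-(\bar\xi,\bar\xi)}{2\xi(c)}$ with $\xi(c)=2$, the term $(\omega_i,\omega_i)$ is the $\Lambda_i$-contribution, and the $k$-dependent pieces $|\omega_i|+\ell k^2$ in the degree shift are designed to cancel the growth of the top degree of the level-$1$ module's flag. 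The appeal to \cref{l:mpart=0} (``essentially explains how to compute the limit'') is what turns ``the coefficient stabilizes'' into ``the stable value is $\rho_{\bs\mu_0}$ at this specific argument'': presumably that lemma says that for the product-of-$q$-binomials that arises, the coefficient of $q^N$ is independent of $N$ once $N$ is large enough relative to the bounds, and equals the total multipartition count, while for negative-looking arguments it vanishes, consistent with the convention $\rho_{\bs b}(m)=0$ for $m\notin\mathbb Z_{\ge0}$ and with \eqref{e:closedball}.

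Finally I would assemble: summing over $\mu\in\Gamma_\xi$ gives $\sum_{\mu\in\Gamma_\xi}\rho_{\bs\mu_0}(f_{i,\xi}(\mu))$, which is the claimed right-hand side. The edge cases worth checking are (a) $\xi\in\mathbb C\delta$ or $\Gamma_\xi=\emptyset$, where both sides are $0$; (b) $i=0$, where $\Lambda_0\otimes\Lambda_0$ decomposes and $(\omega_0,\omega_0)=0$, so $f_{0,\xi}(\mu)=\tfrac14(-(\bar\xi,\bar\xi)-4\xi(d)-(\mu,\mu))$ forces $\bar\xi=0$ hence $\xi$ level-$2$ multiple of $\Lambda_0$ up to $\delta$, matching the known decomposition of $V(\Lambda_0)^{\otimes 2}$; and (c) consistency with $n=1$, where this must reduce to \cite[Proposition 2.6.1]{jamo:exf}. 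I expect the main obstacle to be the bookkeeping of degree shifts: matching the $\delta$-grading conventions in \eqref{e:socdeg'}, the $\tau_m$-shift in \eqref{e:gsdfm}, and the $k$-dependent shift in \cref{t:typeAn} so that the limit argument comes out to exactly $f_{i,\xi}(\mu)$ rather than $f_{i,\xi}(\mu)$ plus some spurious constant — i.e., verifying that the ``stabilization'' is to the right value and not merely that a limit exists. The second genuine subtlety is justifying that the $q$-binomial product, as a formal object whose coefficients a priori could be read in $q$ or $q^{-1}$, is being specialized correctly so that ``coefficient of $q^{f_{i,\xi}(\mu)}$'' is the multipartition count with the bounds $\bs\mu_0$ and not the transposed/complementary count; this is where a careful appeal to the precise statement of \eqref{e:multflagAn} and the identification \eqref{e:binmpart} is essential.
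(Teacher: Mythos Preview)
Your outline matches the paper's proof essentially step for step: specialize \cref{t:typeAn} to $\Lambda=\Lambda_i$, plug in the level-$2$ flag multiplicity formula \eqref{e:multflagAn}, rewrite via \eqref{e:binmpart} as a multipartition count with bounds $\bs\mu_0$, and then invoke \cref{l:mpart=0} to evaluate the limit as $\rho_{\bs\mu_0}(f_{i,\xi}(\mu))$. Two small corrections: first, $s=\Lambda_i(d)=0$, not $\xi(d)$; second, \cref{l:mpart=0}(b) is not the statement that a coefficient is eventually constant in $N$, but a complementation bijection $\mathcal P_{\bs b}^{k-\bs a}(k|\bs b|-\langle\bs a,\bs b\rangle-f)\to\mathcal P_{\bs b}(f)$ valid for large $k$, which is exactly the shape the argument $m_{i,\mu,r(\mu,\xi)}$ takes after the algebra is carried out.
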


Since $\Gamma_\xi$ is a discrete set, it follows that the summation appearing in \cref{t:typeAnpart} is actually finite. 
In the case $n=1$, after a more explicit description of the sets $\Gamma_\xi$, this finite set was precisely described in \cite[Proposition 2.6.2 and Corollary 2.6.3]{jamo:exf}. For higher rank, we shall address the task of describing $\Gamma_\xi$ in a more concrete way in \cref{ss:orbits}. In particular, setting 
\begin{equation*}
	\eta = \Lambda_0+\Lambda_i-\xi = \sum_{l\in\hat I}\eta_l\alpha_l \quad\text{and}\quad \bs\eta=(\eta_0,\dots,\eta_n),
\end{equation*}
we will establish in \cref{ss:maincomb} the following connection with \eqref{e:maincomb}:
\begin{equation}\label{e:GxiAngen}
	\xi \equiv \Lambda_j+\Lambda_k \quad\Rightarrow\quad [V(\Lambda_0)\otimes V(\Lambda_i):V(\xi)] = \sum_{(\bs m,\bs p)\in\Gamma_{j,k}} \rho_{\bs b(\bs m,\bs p)} \left( f_{i,\bs\eta}(\bs a(\bs m,\bs p))\right).
\end{equation}
On the other hand, Theorem 2.1 of \cite{mw:tpgen} says
\begin{equation}\label{e:MWtm}
	[V(\Lambda_0)\otimes V(\Lambda_i):V(\xi)] = \tau^n_i(\bs\eta),
\end{equation}
thus completing the proof of \cref{t:maincomb}. 

\begin{ex}
	For $n=2$, $[V(\Lambda_0)\otimes V(\Lambda_1):V(2\Lambda_2-\eta\delta)]$ is given by the right-hand side of \eqref{e:maincomb22}. \endd
\end{ex}

We also recall that one can compute the outer multiplicities of $V(\Lambda_i)\otimes V(\Lambda_j)$ by combining \cref{t:typeAnpart} with \cite[Equation (2.5.1)]{jamo:exf}. More precisely, given $\xi\in\hat P^+,\xi\le \Lambda_i+\Lambda_j$, set
\begin{equation*}
	\xi' = \sum_{k\in I} \xi(h_k)\Lambda_{i+k},
\end{equation*}
and let $c_k\in\mathbb Z, k\in I$, be defined by
\begin{equation*}
	\begin{bsmallmatrix} c_1\\ \vdots\\ c_n \end{bsmallmatrix} = C^{-1}B
\end{equation*} 
where $C$ is the Cartan matrix of type $A_n$ and $B$ is the column matrix whose $k$-th entry is
\begin{equation*}
	\delta_{i,k}+\delta_{j,k}-\xi(h_k) - (\delta_{k,1}+\delta_{k,n})\xi(d).
\end{equation*}
Then, setting $\Lambda_k=\Lambda_i$ if $k\equiv i\in\hat I$ for $k\in\mathbb Z$, \cite[Equation (2.5.1)]{jamo:exf} says
\begin{equation}\label{e:tpgenA}
	[V(\Lambda_i)\otimes V(\Lambda_j):V(\xi)] = [V(\Lambda_0)\otimes V(\Lambda_{i+j}):V(\xi'-c_i\delta)]. 
\end{equation}

\section{Affine Weyl Group Orbits and the Proof of \cref{t:maincomb}}\label{s:orbits}

\subsection{Socle Computations}\label{ss:soc}
As a first step towards obtaining a more explicit characterization of the sets $\Gamma_\xi$, we present a formula for $\soc(\xi)$ for all $\xi\in\hat P$.

Fix $\ell\in\mathbb Z_{>0}$ and $\mu\in P$, let 
\begin{equation*}
	\soc(\ell,\mu) = \soc(\ell\Lambda_0+w_o\mu).
\end{equation*}
and set
\begin{equation}\label{e:defai}
	a_i = -\sum_{j=i}^n (w_o\mu)(h_j) = \sum_{j=1}^{n+1-i}\mu(h_j)\ \ \text{for}\ \ i\in I.
\end{equation}
In particular, 
\begin{equation}\label{e:inca}
	\mu\in P^+ \quad \Leftrightarrow\quad a_1\ge a_2\ge \cdots\ge a_n\ge 0.
\end{equation}
In any case, there exist unique $p_i \in \mathbb{Z},m_i \in \mathbb Z,0<m_i\leq \ell, i\in I$, such that
\begin{align} \label{e:lambdaeps}
	a_i = p_i  \ell + m_i.
\end{align}
Note that,  $\mu\in P^+$ if and only if (cf. \eqref{e:pfordomcomb})
\begin{equation}\label{e:pfordom}
	p_i\ge -1 \ \forall\ i\in I, \quad p_i=-1 \ \Leftrightarrow\ a_i=0 \Rightarrow\ m_i=\ell, \quad\text{and}\quad  p_1\ge p_2\ge\cdots\ge p_n. 
\end{equation}
Recall \eqref{e:resp} and set
\begin{equation}\label{e:lambdaepstup}
	\bs m(\ell,\mu) = (m_1,\dots, m_n), \quad \bs p(\ell,\mu) = (p_1,\dots,p_n),  \quad\text{and}\quad p(\ell,\mu) = \res(\bs p(\ell,\mu)).
\end{equation}
To shorten notation, we henceforth denote these elements by $\bs m,\bs p$, and $p$, respectively. Recall that $\overline{\bs m}$ denotes the associated partition of $|\bs m|$ and note
\begin{equation*}
	\overline{\bs m}\in \mathcal P_\ell^{=n}.
\end{equation*}
Write $\overline{\bs m}=(k_1^{s_1}\ k_2^{s_2}\ \cdots\ k_l^{s_l})$ as in \eqref{e:partmulnot}, set $k_{l+1}=0$ for convenience, consider $\bs s = (s_1,\dots,s_l)\in\mathbb Z^s_{>0}$, and recall \eqref{e:|part|}. 
We will see in \cref{ss:socle} that
\begin{equation}\label{et:soclek}
	\soc(\ell, \mu) \equiv (\ell-k_1)\Lambda_p + \sum_{j=1}^l (k_j-k_{j+1})\Lambda_{p-|\bs s|_j}.
\end{equation}
Here, as before, given $k\in\mathbb Z$, we understand $\Lambda_k=\Lambda_i$ if $k\equiv i\in\hat I$. Equivalently, letting
\begin{equation}
	\bs m'(\ell,\mu) = \overline{(\ell,\bs m)},
\end{equation}
writing $\bs m'(\ell,\mu) = (m'_1,\dots,m'_{n+1})$, and setting $m'_{n+2}=0$ for convenience, \eqref{et:soclek} can be rewritten as
\begin{equation}\label{et:socle'}
	\soc(\ell, \mu) \equiv \sum_{j=0}^n (m'_{j+1}-m'_{j+2})\Lambda_{p-j}.
\end{equation}

\subsection{Orbits Characterizations}\label{ss:orbits}
We now use \eqref{et:socle'} to give a combinatorial characterization of the sets $\Gamma_\xi, \xi\in\hat P^+$. Using either \eqref{et:soclek} or \eqref{et:socle'}, one easily sees that
\begin{equation}\label{e:minpart}
	\soc(\ell, \mu)(h_{p+1}) = \min\{m_i:i\in I\}>0,
\end{equation}
where $p=p(\ell,\mu)$ and $m_i, i\in I$, are defined as in \eqref{e:lambdaeps}, and, for $k\in\mathbb Z$, we understand $h_k=h_i$ if $k\equiv i\in\hat I$. Also, writing $\bs m'(\ell,\mu) = (m'_1,\dots,m'_{n+1})$ as before and recalling \eqref{e:difp}, we have
\begin{equation}\label{e:caredifs}
	(m'_1-m'_2, \dots, m'_{n}-m'_{n+1},m'_{n+1}) = \bs m'(\ell,\mu)^-. 
\end{equation}

Fix $\xi\in\hat P^+$ and let 
\begin{equation}
	\bs c(\xi) = (\xi(h_0),\dots,\xi(h_n))\in\mathbb Z_{\ge 0}^{n+1} \quad\text{and}\quad \ell=\xi(c)=|\bs c(\xi)|.
\end{equation}
Given $p\in\hat I$, let $\sigma_p\in\Sigma_{n+1}$ be the unique element such that 
\begin{equation}
	\sigma_p(j) \equiv p+2-j  \ \ \text{for}\ \ 1\le j\le n+1.
\end{equation}
Recall \eqref{e:socdeg'}. 
In light of \eqref{et:socle'} and \eqref{e:caredifs}, for $\mu\in P^+$, we have
\begin{equation}\label{e:the orbits}
	\ell\Lambda_0+w_o\mu+r\delta \in\widehat{\mathcal W}\xi \quad\Leftrightarrow\quad
	\bs m'(\ell,\mu)^- = \sigma_{p(\ell,\mu)}\bs c(\xi) \ \ \text{and}\ \ r= r(\mu,\xi).
\end{equation}
In other words,
\begin{equation}\label{e:Gxipart}
	\Gamma_\xi = \{\mu\in P^+: \bs m'(\ell,\mu)^- = \sigma_{p(\ell,\mu)}\bs c(\xi) \}. 
\end{equation}

Note \eqref{e:lambdaeps} gives rise to a bijective map
\begin{equation*}
	\phi: P\to [1,\ell]^n\times \mathbb Z^n, \quad \mu\mapsto (\bs m(\ell,\mu),\bs p(\ell,\mu)). 
\end{equation*}
Let $\mathscr P^+=\phi(P^+)$. It follows that $(\bs m,\bs p)\in\mathscr P^+$ if and only if \eqref{e:pfordom} holds (for $\ell=2$, this coincides with the definition of $\mathscr P^+$ given just before \eqref{e:pfordomcomb}).  On the other hand, we have a map
\begin{equation*}
	\psi: \mathcal P_\ell^{=n}\times \hat I \to \hat P^+,
\end{equation*}
where $\psi(\bs m,p)$ is the unique element of $\hat P^+$ satisfying
\begin{equation*}
	\sigma_p\,\bs c(\psi(\bs m,p)) = (\ell,\bs m)^- \quad\text{and}\quad \psi(\bs m,p)(d)=0. 
\end{equation*}
Given $\xi\in\hat P^+$, set
\begin{equation}
	\mathscr C_\xi = \psi^{-1}(\xi-\xi(d)\delta) \quad\text{and}\quad 
	\Gamma'_\xi = \{(\bs m,\bs p)\in\mathscr P^+: (\overline{\bs m},\res(\bs p))\in\mathscr C_\xi\}.
\end{equation}
It now follows from \eqref{e:Gxipart} that the map
\begin{equation}\label{e:Gxipart'}
	\Gamma_\xi \to \Gamma'_\xi, \quad \mu\mapsto (\bs m(\ell,\mu),\bs p(\ell,\mu))
\end{equation}
is a bijection. 
Let us now give more concrete characterizations of $\Gamma_\xi$ and $\Gamma_\xi'$ on special cases.

\begin{ex}\label{ex:sup2soc} 
	Let $a,b\in\mathbb Z_{\ge 0}, a\le b$, and consider $\xi=a\Lambda_j+b\Lambda_k$ for some $j,k\in\hat I$. We start with a generalization of the discussion made in the paragraph starting with \eqref{e:ms2}. 	
	
	Given $1\le s\le n+1$, set
	\begin{equation*}
		\bs m_{a,b}(s) = ((a+b)^{s-1},a^{n+1-s})\in\mathcal P^n_{a+b}.
	\end{equation*}  
	Let also $\mathcal O(s)$ be the orbit of $\bs m(s)$ in $\mathbb Z^n$ under the usual action of $\Sigma_n$ and 
	\begin{equation*}
		\mathcal S_\xi = \{s\in[1,n+1]: s\equiv j'-k' \text{ for all } \{j',k'\}=\{j,k\} \text{ such that } \xi(h_{j'})=a\}. 
	\end{equation*}
	Note
	\begin{equation*}
		a<b \quad\Rightarrow\quad \mathcal S_\xi = \{s\in[1,n+1]: s\equiv j-k \}
	\end{equation*}
	has one element, while
	\begin{equation*}
		a=b \quad\Rightarrow\quad \mathcal S_\xi = \{s\in[1,n+1]: s\equiv \pm|j-k| \},
	\end{equation*}
	may have two elements (cf. \eqref{e:parpar2}). Also,
	\begin{equation*}
		s\in\mathcal S_\xi \quad\Rightarrow\quad \text{there exists unique}\ \ p\in\hat I \ \ \text{such that}\ \ \xi(h_{p+1})=a \ \ \text{and} \ \ \xi(h_{p+1-s})=b,
	\end{equation*}
	which we denote by $p(s)$. More precisely,
	\begin{equation}
		a<b \quad\Rightarrow\quad p(s)+1\equiv j
	\end{equation}
	and $p(s)$ is given by \eqref{e:ps2} otherwise.
	We will show
	\begin{equation}\label{e:G'xi}
		\Gamma'_\xi = \{(\bs m,\bs p)\in\mathscr P^+: \bs m\in\mathcal O(s) \text{ and }\res(\bs p)=p(s) \text{ for some } s\in\mathcal S_\xi\}. 
	\end{equation}
	Equivalently, 
	\begin{equation}\label{e:orbit2}
		\mu\in\Gamma_\xi \quad\Leftrightarrow\quad 
		\bs m'(a+b,\mu) = ((a+b)^s,a^{n+1-s}) \ \ \text{and}\ \ p(a+b,\mu)=p(s) \ \ \text{for some}\ \ s\in\mathcal S_\xi.
	\end{equation}
		
	If $\mu\in\Gamma_\xi$, it follows from \eqref{et:soclek} that $\bs m'(a+b,\mu)$ has at most two parts and, hence, $\bs m'(a+b,\mu)=((a+b)^s,a^{n+1-s})$ for some $1\le s\le n+1$. In that case, \eqref{et:socle'} reads
	\begin{equation}\label{et:socle2}
		\soc(a+b, \mu) \equiv a\Lambda_{p+1} + b\Lambda_{p+1-s} \quad\text{where}\quad p = p(a+b,\mu),
	\end{equation}
	from where the right-hand-side of \eqref{e:orbit2} easily follows. The converse is now also clear. 
	
	In the case $a=b$, \eqref{e:G'xi} can be simplified as follows. Set $\ell=a+b$, so $\xi=\ell\Lambda_j$ for some $j\in\hat I$. In this case, we will show
	\begin{equation}\label{e:elli}
		\mu\in \Gamma_\xi \quad\Leftrightarrow\quad 
		\mu \in \ell P^+ \ \ \text{and}\ \  
		j+\sum_{i\in I}(n+1-i)\frac{\mu(h_i)}{\ell} \in(n+1)\mathbb Z.
	\end{equation}
	Note that, in this case $\mathcal S_\xi=\{n+1\}$ and, hence, $\bs m_{a,a}(s) = (\ell^n)$. Moreover, \eqref{e:orbit2} becomes 
	\begin{equation}\label{e:socell}
		(\bs m,\bs p)\in \Gamma_\xi' \quad\Leftrightarrow\quad \bs m=(\ell,\dots,\ell) \quad\text{and}\quad  \res(\bs p)+1\equiv j.
	\end{equation}
	
	Since 
	\begin{equation*}
		\mu\in\Gamma_\xi \quad\Leftrightarrow\quad (\bs m(\ell,\mu),\bs p(\ell,\mu))\in\Gamma'_\xi,
	\end{equation*}
	we see the first condition in \eqref{e:elli} must be satisfied by all $\mu\in\Gamma_\xi$. Letting $a_i$ be as in \eqref{e:defai} and writing $\bs p(\ell,\mu)=(p_1,\dots,p_n)$, this implies 
	\begin{equation*}
		p_i = \frac{a_i}{\ell}-1.
	\end{equation*}
	Letting $\bs a = (a_1,\dots,a_n)$, it easily follows from \eqref{e:defai} that
	\begin{equation*}
		(\mu(h_n),\dots,\mu(h_1)) = \bs a^-.
	\end{equation*}
	Letting $p=\res(\bs p(\ell,\mu))$, then $p \equiv -\sum_{i\in I}p_i$ by definition. The second condition in \eqref{e:socell} then implies
	\begin{equation*}
		j \equiv 1 - \sum_{i\in I}p_i = 1 - \sum_{i\in I}\left(\left(\sum_{j=1}^{n+1-i}\frac{\mu(h_j)}{\ell}\right)-1\right) = n+1 - \sum_{i\in I} (n+1-i)\frac{\mu(h_i)}{\ell},
	\end{equation*}
	which proves the second condition on the right-hand-side of \eqref{e:elli} holds. 
	
	Conversely, if the first condition on the right-hand side of \eqref{e:elli} holds, we have $m_i=\ell$ for all $i\in I$. The above computations then show the second condition on the right-hand-side of \eqref{e:elli} implies $p+1\equiv j$, thus showing $(\bs m(\ell,\mu),\bs p(\ell,\mu))\in\Gamma'_\xi$. 	\endd
\end{ex}

\subsection{On the Proof of \eqref{et:soclek} and Related Computations}\label{ss:socle}
Denote the elements defined in \eqref{e:lambdaepstup} by $\bs m,\bs p$, and $p$, respectively, set $m_{n+1}=0$, and let $\sigma\in\Sigma_{n+1}$ be such that 
\begin{equation}\label{e:defsigmaA}
	m_{\sigma(k)}\le m_{\sigma(k+1)} \ \ \text{for all}\ \ 1\le k\le n, k\ne p, \ \ \text{and}\ \ m_{\sigma(n+1)}\le m_{\sigma(1)} \ \ \text{if}\ \ p>0.
\end{equation}
Note 
\begin{equation}\label{e:mmaxmin}
	m_{\sigma(p+1)}=m_{n+1}=0 \ \ \text{and}\ \ m_{\sigma(p)} = \max\{m_i:i\in I\}>0. 
\end{equation} 
Extend $\sigma$ to $\hat I\cup\{n+1\}$ by setting $\sigma(0)=0$ and set $m_0 = m_{\sigma(n+1)}$. 
The following is a mildly modified version of \cite[Proposition 3.2]{kura} from where \eqref{et:soclek} is easily deduced. 
\begin{equation}\label{et:socle}
	\soc(\ell, \mu) \equiv (\ell-m_{\sigma(p)})\Lambda_p + \sum_{i\in\hat I\setminus\{p\}} (m_{\sigma(i+1)}-m_{\sigma(i)})\Lambda_i. 
\end{equation}
The argument we present here is slightly modified from that of \cite{kura} and makes use of computations which explain Definition \eqref{e:fpart}. The main difference in the argument here compared to that of \cite{kura} is that we do not use Dynkin diagram automorphisms, while  \cite{kura} regards $\mathcal{\widehat W}$ as a subgroup of the extended affine Weyl group $\mathcal{T} \ltimes \mathcal{\widehat W}$, where $\mathcal{T}$ is the group of automorphisms of the affine Dynkin diagram, and the proof indeed utilizes elements from $\mathcal T$. Aiming at extending the results of \cite{kura}, as well as those of \cref{ss:omdf},  we were able to adapt the argument below to obtain generalizations of \eqref{et:socle} for types $B,C,D,G$ which will appear elsewhere. 

It will be convenient to introduce further notation. Set
\begin{equation}\label{e:defuiA}
	u_i = r_i\ell-m_i \ \ \text{for}\ \ 0\le i\le n+1,
\end{equation}
with
\begin{equation}\label{e:defriA}
	r_{\sigma(i)} =   1 \ \ \text{if}\ \ 0\leq i \leq p \ \ \text{and}\ \ r_{\sigma(i)} = 0 \ \ \text{otherwise.}
\end{equation}
In particular, $u_{n+1}=u_{\sigma(p+1)}=0$. 

It will also be convenient to work with a different basis for $\lie h^*$, which we now recall (cf. \cite[Chapter 12]{humlie}). The convenience arises from the fact that the action of the Weyl group on this basis lead to computations which are more easily handled in comparison to working with the bases of simple roots or fundamental weights. Thus, let $\lie h_{\mathbb R}^*$ denote the real span of $\Delta$, consider  the standard basis $\{\varepsilon_i:1\leq i \leq n+1\}$ of $\mathbb R^{n+1}$, and set
\begin{align} \label{e:rootsystemrealizationA}
	v=	\varepsilon_1 + \cdots + \varepsilon_{n+1}
\end{align} 
Letting $\overline{\varepsilon_i}= \varepsilon_i + \mathbb{R} v$, it follows that $\{\overline{\varepsilon}_i: 1\le i\le n\}$ is a basis of $\mathbb R^{n+1}/\mathbb{R}v$. Therefore, there exists a linear isomorphism $\iota: \lie h_{\mathbb R}^* \to \mathbb R^{n+1}/\mathbb{R}v$ determined by
\begin{equation}\label{e:convbasisA}
	\iota(\alpha_i) = \overline{\varepsilon_i} - \overline{\varepsilon_{i+1}}, \quad \text{for} \quad i<n, \quad \text{and} \quad     \iota(\alpha_n) = 
	\overline{\varepsilon_n}-\overline{ \varepsilon_{n+1}}.
\end{equation}
The preimage of  $\{\overline{\varepsilon}_i: 1\le i\le n\}$ by $\iota$ is then a basis of $\lie h_\mathbb R^*$ and, hence, also of $\lie h^*$. By abuse of notation, we denote $\iota^{-1}(\overline{\varepsilon}_i), 1\le i\le n+1$, simply by $\varepsilon_i$. Quite clearly,
\begin{equation}\label{e:ratcoord}
	\lambda = \sum_{i\in I} a_i\varepsilon_i \in P \quad\Rightarrow\quad a_i\in\mathbb Q. 
\end{equation}
Indeed, it will eventually follow that
\begin{equation}\label{e:lambdainepsilonA}
	a_i = \sum\limits_{j\in I}\lambda(h_j).
\end{equation}

Henceforth, we use the above with $\lambda=-w_o\mu$, so  \eqref{e:lambdainepsilonA}  becomes \eqref{e:defai}. 
Recall \eqref{e:lambdaeps}, \eqref{e:defriA}, the definition of the map $\tau$ given by \eqref{e:areflex}, and consider 
\begin{equation} \label{e:tdef}
	t:= t_{(p_1+r_1) \varepsilon_1 + \cdots + (p_n+r_n)\varepsilon_n} \in \tau( \hlie h ^*).
\end{equation}
Recalling \eqref{e:moddelta} as well, it is immediate from the definition of $\tau$ that 
\begin{equation*}
	t(\ell\Lambda_0) \equiv \ell\Lambda_0 + \ell\sum_{i \in I } (p_i+r_i) \varepsilon_i \quad\text{and}\quad t\lambda\equiv\lambda.
\end{equation*}
Using \eqref{e:lambdaeps}, it then follows that  
\begin{align*}
	t(\ell\Lambda_0-\lambda) &\equiv\ell \Lambda_0+ \sum_{i \in I }\left(r_i\ell - m_i \right) \varepsilon_i.
\end{align*}
Recall  \eqref{e:defuiA} and the definition of the permutation $\sigma$ in the paragraph containing \eqref{e:mmaxmin},   and set 
\begin{equation}\label{e:barsoc}
	\Lambda = \ell \Lambda_0 + \sum_{i=1}^{n+1}u_{\sigma(i)} \varepsilon_{i} = \ell\Lambda_0 + \sum_{i \in I }u_i \varepsilon_{\sigma^{-1}(i)},
\end{equation}
where,  the last equality follows  since $u_{n+1}=u_{\sigma(p+1)}=0$. 
Consider also $\tilde \sigma \in GL ( \lie h ^*_{\mathbb{R}}) $ given by
\begin{equation}\label{e:tilsigma}
	\tilde \sigma(\varepsilon_i) =  \varepsilon_{\sigma^{-1}(i)} \quad\text{for}\quad 1\le i\le n+1,
\end{equation}  
and note 
\begin{align*}
	\tilde\sigma t(\ell\Lambda_0-\lambda) &\equiv \ell \Lambda_0  + \sum_{i \in I }\left(r_i\ell - m_i \right) \varepsilon_{\sigma^{-1}(i)} \overset{\eqref{e:defuiA}}{=} \Lambda.
\end{align*}
We claim
\begin{gather} 
	\label{e:tinT} t \in \tau(Q),\\
	\label{e:tildesigmainW} \tilde \sigma \in \mathcal W,\\
	\label{e:ui-ujinZ} u_{\sigma(i)} - u_{\sigma(i+1)} \in \mathbb{Z}_{\geq 0},\\ \label{e:Lambda=sum}\Lambda=\sum_{i \in \hat I} (u_{\sigma(i)}-u_{\sigma(i+1)}) \Lambda_{i}. 
\end{gather}
These claims imply $\Lambda \in (\widehat{\mathcal W}(\ell \Lambda_0 - \lambda) \cap \hat P^+)+\mathbb Z\delta$ and, hence, $\Lambda \equiv \soc(\ell,\mu)$. 

Note \eqref{e:lambdainepsilonA} implies $m_i\in\mathbb Z$ for all $1\le i\le n+1$ and, hence, the same is true for $u_i$ by \eqref{e:defuiA}. If $p=0$, then \eqref{e:defriA} and \eqref{e:defuiA} imply $u_i=-m_{i}$ for all $1\le i\le n+1$. Since $m_0 = m_{\sigma(n+1)}$, it follows from \eqref{e:defsigmaA} that  
\begin{align*}
	u_{\sigma(0)} = \ell - m_{\sigma(n+1)} \geq 0 = u_{\sigma(1)} \geq u_{\sigma(2)}\geq \cdots \geq  u_{\sigma(n)} \geq u_{\sigma(n+1)}.
\end{align*}
On the other hand, if $p>0$, then 
\begin{align*}
	u_{\sigma(0)}= \ell - m_{\sigma(n+1)} , \quad u_{\sigma(i)}=\ell  -m_{\sigma(i)}, \ 1\leq i \leq p, \quad \text{and} \quad u_{\sigma(i)}=  -m_{\sigma(i)}, \ p< i \leq n+1,
\end{align*}
and \eqref{e:defsigmaA} implies $\ell\geq m_{\sigma(p)} \geq \cdots \geq m_{\sigma(1)} \geq  m_{\sigma(n+1)} \geq \cdots \geq m_{\sigma(p+1)}= 0$.
Thus, for any value of $p$, we see that, for all $i\in\hat I$, we have 
\begin{align}\label{e:soccefu}
	u_{\sigma(i)}-u_{\sigma(i+1)}=\begin{cases}
		\ell-m_{\sigma(p)}, & \text{if } i=p\\
		m_{\sigma(1)}-m_{\sigma(n+1)}, & \text{if } i=0\ne p\\ 
		m_{\sigma(i+1)}-m_{\sigma(i)}, & \text{otherwise,} 
	\end{cases} 
\end{align}
from where \eqref{e:ui-ujinZ} follows. Moreover, \eqref{e:soccefu} and \eqref{e:Lambda=sum} clearly imply \eqref{et:socle}, thus completing the proof. 

Thus, it remains to check the other three claims.  Since $\alpha_i = \varepsilon_i-\varepsilon_{i+1}$ by \eqref{e:convbasisA}, we have
\begin{equation}\label{e:fwepsA}
	\begin{aligned}
		& \omega_i  =  \sum_{j=1}^{i} \frac{(n+1-i)j}{n+1} \alpha_j + \sum_{j=i+1}^{n} \frac{i(n+1-j)}{n+1} \alpha_j = \varepsilon_1 + \cdots + \varepsilon_i,\\  
		& \Lambda_i = \omega_i + \Lambda_0, \quad i\in I, \quad\text{and}\quad \alpha_0=\delta - (\varepsilon_1-\varepsilon_{n+1}).
	\end{aligned}
\end{equation}
The following observation then leads to \eqref{e:lambdainepsilonA}:
\begin{equation} \label{e:lambdaA}
	\sum_{i \in I}a_i \varepsilon_i = \lambda = \sum_{i \in I} \lambda(h_i) \omega_i \overset{\eqref{e:fwepsA}}{=}  \sum_{i \in I} \left(\sum_{j =i}^n \lambda(h_j)\right) \varepsilon_i.
\end{equation}
Recalling that $\omega_{n+1}=\omega_0=0$ and $\Lambda_{n+1}=\Lambda_0$, we see 
\begin{equation}\label{e:fwepsC'}
	\varepsilon_i =\omega_i-\omega_{i-1} =\Lambda_i-\Lambda_{i-1}  = 
	\sum_{j=i}^{n} \frac{n+1-j}{n+1}\alpha_j - \sum_{j=1}^{i-1} \frac{j}{n+1}\alpha_j \quad\text{for all}\quad 1\leq i\leq n+1.
\end{equation}
Then, given $b_i\in\mathbb Z, i\in I$, note
\begin{align*}
	b_1 \varepsilon_1 + \cdots +b_n \varepsilon_n 
	&=   \sum_{i=1}^{n} b_i \Big( \sum_{j=i}^{n} \frac{n+1-j}{n+1} \alpha_j  - \sum_{j=1}^{i-1} \frac{j}{n+1} \alpha_j \Big)\\
	&=\sum_{i=1}^n \left( \frac{n+1-i}{n+1} \sum_{j=1}^i b_j -\frac{i}{n+1} \sum_{j=i+1}^n b_j  \right) \alpha_i.
\end{align*}
Let $c_i$ be the coefficient of $\alpha_i$ in the last expression. One easily checks $c_i\in\mathbb Z$ for all $i\in I$ if and only if $b_1+\cdots+b_n \in(n+1)\mathbb Z$. In other words, 
\begin{equation}\label{e:epsinQveeA}
	b_1 \varepsilon_1 + \cdots +b_n \varepsilon_n \in Q \quad\Leftrightarrow\quad  b_1+\cdots +b_n \in (n+1)\mathbb Z .
\end{equation}
Since
\begin{equation*}
	\sum_{i\in I} p_i+r_i = |\bs p| + p \in(n+1)\mathbb Z,
\end{equation*}
it follows from \eqref{e:defriA} and \eqref{e:tdef} that $t \in \tau(Q)$, thus proving \eqref{e:tinT}. 

The Weyl group of type $A$ is isomorphic to $\Sigma_{n+1}$ with action of $\mathcal{W}$ on the vectors $\varepsilon_i$ given as follows \cite[Chapter 12]{humlie}. Let $\sigma_i\in \Sigma_{n+1}$ be the transposition $(i,i+1)$. Then,
\begin{equation}\label{e:weylonepsA}
	s_i\varepsilon_j = \varepsilon_{\sigma_i(j)} \ \ \text{for}\ \ i\in I, 1\leq j \leq n+1.
\end{equation}
In particular,  \eqref{e:tilsigma} implies \eqref{e:tildesigmainW}.

Finally, since $\varepsilon_{n+1}=-\sum_{i \in I} \varepsilon_i$, \eqref{e:Lambda=sum} follows by noticing
\begin{align*}
	\Lambda = \ell \Lambda_0 + \sum_{i=1}^{n+1} u_{\sigma(i)} \varepsilon_{i}&=\ell \Lambda_0 + \sum_{i \in I }(u_{\sigma(i)}-u_{\sigma(n+1)}) \varepsilon_{i}\\
	& \stackrel{\eqref{e:fwepsA}}{=} \ell \Lambda_{0} + \sum_{i \in I }(u_{\sigma(i)}-u_{\sigma(n+1)}) (\Lambda_i-\Lambda_{i-1} ) =
	\sum_{i \in \hat I} (u_{\sigma(i)}-u_{\sigma(i+1)}) \Lambda_{i},
\end{align*}
where the last step follows from \eqref{e:defuiA} since $m_0 = m_{\sigma(n+1)}$.

Let us also use \eqref{e:fwepsA} for checking that
\begin{align} \label{e:(i,j)A}
	(\varepsilon_i,\varepsilon_j)=
	\frac{n}{n+1}\delta_{i,j}+ \frac{1}{n+1}(\delta_{i,j}-1) \quad\text{for all}\quad i,j\in I,
\end{align}
which will be needed for proving \eqref{e:f=ip} below, explaining Definition \eqref{e:fpart}. 
Indeed,
\begin{align*}
	(\varepsilon_i,\varepsilon_i)= \left(\omega_i-\omega_{i-1}, \sum_{j=i}^{n} \frac{n+1-j}{n+1}\alpha_j - \sum_{j=1}^{i-1} \frac{j}{n+1}\alpha_j \right)=\frac{i-1}{n+1}+\frac{n+1-i}{n+1}=\frac{n}{n+1},
\end{align*}
for all $i\in I$ and, for $1\le i<j\le n$,
\begin{align*}
	(\varepsilon_i,\varepsilon_j)=\left(\omega_i-\omega_{i-1},\sum_{k=j}^{n} \frac{n+1-k}{n+1}\alpha_k - \sum_{k=1}^{j-1} \frac{k}{n+1}\alpha_k \right)=\frac{i-1}{n+1}-\frac{i}{n+1}=-\frac{1}{n+1}.
\end{align*}

\subsection{Proof of \eqref{e:GxiAngen}}\label{ss:maincomb}
In light of \cref{t:typeAnpart} and \eqref{e:MWtm}, we need to establish a comparison between the right-hand sides of both expressions. This will be accomplished using a special case of \cref{ex:sup2soc}. Fix $i\in\hat I$ and let $\xi\in\hat P^+$. Then,
\begin{equation*}
	[V(\Lambda_0):V(\Lambda_i):V(\xi)] \ne 0 \quad\Rightarrow\quad \Lambda_0+\Lambda_i-\xi = \sum_{r\in\hat I} \eta_j\alpha_j \quad\text{for some}\quad \eta_j\in\mathbb Z_{\ge 0}.
\end{equation*}
Note this immediately implies $\eta_0=-\xi(d)$. Moreover, the condition $\xi\in\hat P^+$ is equivalent to \eqref{e:domceta}. Thus, we have an injective map
\begin{equation*}
	\{\xi\in\hat P^+:\xi\le \Lambda_0+\Lambda_i\}\to \mathbb Z^{n+1}_{\ge 0}, \ \ \xi\mapsto \bs\eta(\xi) = (\eta_0,\eta_1,\dots,\eta_n),
\end{equation*}
and, by \eqref{e:xi=jk}, $\xi$ must be of the form $\Lambda_j+\Lambda_k-\eta_0\delta$ for some $j,k\in\hat I$. Thus, \cref{t:maincomb} follows if we check the right-hand side of the formula given in \cref{t:typeAnpart} coincides with the right-hand side of \eqref{e:GxiAngen} whenever $\xi=\Lambda_j+\Lambda_k-\eta_0\delta$. 

A quick comparison of \eqref{e:G'xi} with $a=b=1$ with \eqref{e:Gjk} shows
\begin{equation*}
	\Gamma'_\xi = \Gamma_{j,k}.
\end{equation*} 
Given $\mu\in\Gamma_\xi$, Write  $\bs m(2,\mu)=(m_1,\dots,m_n), \bs p(2,\mu)=(p_1,\dots,p_n)$ and let $\bs a$ be as in \eqref{e:acomb} or, equivalently, \eqref{e:lambdaeps}. Then,
\begin{equation*}
	\mu(h_r) = a_r-a_{r+1} = 2(p_r-p_{r+1}) + m_r-m_{r+1} \quad\text{for all}\quad r\in I 
\end{equation*}
where we set $a_{n+1}=0, p_{n+1}=-1$ and $m_{n+1}=2$. Plugging this  back in \eqref{e:mu01} and \eqref{e:bsmuj} and comparing with \eqref{e:boundcomb}, one easily checks that 
\begin{equation*}
	\bs\mu_0 = \bs b(\bs m(2,\mu),\bs p(2,\mu)) \quad\text{for all}\quad \mu\in\Gamma_\xi.
\end{equation*}
Moreover, it follows from \eqref{e:(i,j)A} and \eqref{e:lambdainepsilonA} that
\begin{equation}\label{e:f=ip}
	(\mu,\mu) = f(\bs a).
\end{equation}
Another quick comparison of the definitions \eqref{e:formcomb} and \eqref{e:formwt} completes the proof.

\section{Outer Multiplicity Proofs}\label{s:omp}

\subsection{A Parameterizing Set of Reduced Expressions}\label{ss:lrseq}
Assume $\lie g$ is of type $A_n$. We will need  a characterization of reduced expressions of elements in $\widehat{\mathcal W}$. 
Consider the following elements of $\mathcal W$:
\begin{equation}
	s_{i,j} = (s_1s_2\cdots s_i)(s_n\cdots s_{j+1}s_j) \quad\text{for}\quad 0\le i<n,\ 1\le j\le n+1.
\end{equation}
In particular, $s_{0,j}=s_n\cdots s_{j+1}s_j, s_{i,n+1} = s_1s_2\cdots s_i$, and 
\begin{equation*}
	\ell(s_{i,j}) = i+n+1-j.
\end{equation*}
Given $l\in\mathbb Z_{\ge 0},w\in\mathcal W$, $\bs i = (i_1,\dots,i_l), \bs j=(j_1,\dots,j_l)\in\mathbb Z^l$ with $0\le i_s<n$ and $1\le j_s\le n+1$ for all $1\le s\le d$,  consider
\begin{equation}
	\sigma(w,\bs i,\bs j) = ws_0s_{i_1,j_1}s_0s_{i_2,j_2}\cdots s_0s_{i_l,j_l}\in\widehat{\mathcal W}. 
\end{equation} 
The number $l$ will be referred to as the length of the pair $(\bs i,\bs j)$ and will be denoted by $\ell(\bs i,\bs j)=l$. We shall say the length-$l$ pair $(\bs i,\bs j)$ is reduced if the following conditions are satisfied.
\begin{enumerate}[(1)]
	\item If $s<l$, either $(i_s,j_s)=(0,1)$ or $i_s\ne 0$ and $j_s\ne n+1$;
	\item $\bs i$ is non-increasing and $\bs j$ is non-decreasing;
	\item $s<l$ and $i_s < j_s -1 \quad\Rightarrow\quad i_s > i_{s+1}$;
	\item $1<l$ and $i_s < i_s -1 \quad\Rightarrow\quad j_{s-1} < j_s$.
\end{enumerate}
Let $\mathscr R = \{(\bs i,\bs j): (\bs i,\bs j) \text{ is reduced}\}$. 
The following theorem is a rephrasing of \cite[Theorem 2.13]{har} (see also \cite[Section 2.3]{est:msc} and \cite{bur}). 

\begin{thm}\label{t:affredexp}
	If $(\bs i,\bs j)\in\mathscr R$ and $l=\ell(\bs i,\bs j)$, 
	\begin{equation*}
		\ell(\sigma(w,\bs i,\bs j)) = \ell(w)+l+\sum_{k=1}^l \ell(s_{i_k,j_k}) \quad\text{for all}\quad w\in\mathcal W. 
	\end{equation*}
	Moreover, the map $\mathcal W\times \mathscr R\to\widehat{\mathcal W}, (w,\bs i,\bs j)\mapsto \sigma(w,\bs i,\bs j)$ is bijective. \qed
\end{thm}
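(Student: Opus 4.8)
The statement to be proved is \cref{t:affredexp}, which has two parts: a length formula for the words $\sigma(w,\bs i,\bs j)$ built from the elementary blocks $s_0 s_{i_k,j_k}$, and the bijectivity of the map $\mathcal W\times\mathscr R\to\widehat{\mathcal W}$. Since this is stated as a rephrasing of \cite[Theorem 2.13]{har}, my plan is to reduce it cleanly to that reference rather than to reprove the combinatorics of reduced words in $\widehat{\mathcal W}$ from scratch; the content of the proof is therefore the translation between Harada's normal-form conventions and the block decomposition $s_0 s_{i_1,j_1}s_0 s_{i_2,j_2}\cdots s_0 s_{i_l,j_l}$ used here. First I would fix the realization of $\widehat{\mathcal W}$ as $\tau(Q)\ltimes\mathcal W$, recall that any element has a unique ``translation part times finite part'' factorization, and observe that each block $s_0 s_{i_k,j_k}$ contributes, up to lower-order corrections from the finite Weyl group, a single positive coroot's worth of translation; iterating the blocks thus sweeps out a staircase region of $Q^+$ exactly when conditions (1)--(4) defining $\mathscr R$ hold.

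The length formula is the technical heart and I would prove it by induction on $l$. The inductive step amounts to showing that prepending a block $s_0 s_{i_1,j_1}$ to an already-reduced word $\sigma(w,\bs i',\bs j')$ (where $(\bs i',\bs j')$ is $(\bs i,\bs j)$ with its first entry removed, still in $\mathscr R$) adds exactly $1+\ell(s_{i_1,j_1})$ to the length. Equivalently, I must show the concatenated word has no cancellation, i.e.\ the prefix $s_0 s_{i_1,j_1}$ sends every inversion of $\sigma(w,\bs i',\bs j')$ to a positive affine root and $\ell(s_{i_1,j_1})=i_1+n+1-j_1$ is itself a reduced expression — the latter is immediate from the explicit form $(s_1\cdots s_{i})(s_n\cdots s_j)$, while the former is where conditions (2)--(4) of ``reduced'' are used. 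Concretely, I would compute the set of roots made negative by $s_0 s_{i_1,j_1}$ in terms of $\varepsilon_i-\varepsilon_j$ and $\delta\pm(\varepsilon_i-\varepsilon_{n+1})$ using \eqref{e:fwepsA} and \eqref{e:weylonepsA}, and check disjointness with the inversion set of the tail, which by induction is controlled by $(\bs i',\bs j')$. The monotonicity conditions (2) and the ``gap'' conditions (3), (4) are precisely what guarantee the relevant staircase indices never collide, so no root is counted twice.

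For bijectivity, injectivity follows from the length formula together with uniqueness of reduced expressions modulo braid moves: distinct reduced $(\bs i,\bs j)$ give $\sigma(w,\bs i,\bs j)$ of the stated (distinct-data-determined) length whose canonical reduced word is visibly recovered from the block structure, so the data $(w,\bs i,\bs j)$ is read off. Surjectivity I would get by a counting/exhaustion argument: every $\widehat w\in\widehat{\mathcal W}$ factors as $\tau(\beta)v$ with $\beta\in Q$, $v\in\mathcal W$; I would peel off blocks $s_0 s_{i_k,j_k}$ greedily from the left of $\widehat w$ — at each stage the ``reduced'' constraints dictate the unique admissible next block that decreases length by $1+\ell(s_{i_k,j_k})$ — until only an element of $\mathcal W$ remains, which becomes $w$. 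This terminates because lengths strictly decrease, and the admissibility bookkeeping reproduces exactly conditions (1)--(4).

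\textbf{Main obstacle.} The delicate point is the no-cancellation claim in the inductive step: verifying that the inversion set of the prefix block $s_0 s_{i_1,j_1}$ is disjoint from that of the tail $\sigma(w,\bs i',\bs j')$ requires tracking how the $s_0$'s interleave the finite reflections and produce affine roots $\delta+(\text{finite root})$ at controlled degrees. Translating Harada's combinatorial normal form into the present block notation so that conditions (1)--(4) line up with her hypotheses — in particular matching the special role of $(0,1)$ and of $j=n+1$, which are the ``degenerate'' blocks — is where I expect the bulk of the careful (but routine) work to lie; everything else is bookkeeping once that dictionary is in place.
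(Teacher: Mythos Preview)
The paper does not prove this theorem at all: the sentence immediately preceding the statement reads ``The following theorem is a rephrasing of \cite[Theorem 2.13]{har}'', and the \qedsymbol\ at the end of the statement signals that no proof is given. So there is no argument in the paper to compare your proposal against; the authors simply import the result from the literature and move on to the corollary they actually need (\cref{c:inccof}).

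Your proposal is therefore \emph{more} than what the paper does. As a sketch it is reasonable in spirit --- reducing to the cited reference plus an induction on the number of blocks is the natural route --- but a few remarks are in order. First, a small slip: the author of \cite{har} is Al Harbat, not Harada. Second, your surjectivity argument via greedy peeling is plausible but would need care: the claim that ``the `reduced' constraints dictate the unique admissible next block'' is not obvious, since at a given stage several blocks might decrease length, and you would have to argue that the conditions (1)--(4) single one out. Third, and more substantively, your inductive step proposes to \emph{prepend} a block to an already reduced tail $(\bs i',\bs j')$, but removing the first entry of a reduced pair need not leave a reduced pair: condition (1) constrains entries with $s<l$, so dropping the first entry shifts indices and the new first entry $(i_2,j_2)$ was previously only required to satisfy the $s<l$ clause, which is fine, but condition (4) as written refers to $j_{s-1}$, so the truncated pair's condition (4) at $s=1$ becomes vacuous --- this is harmless, but the point is that you should check carefully that truncation preserves membership in $\mathscr R$, or else induct in the other direction (append rather than prepend). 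None of this is fatal, but if you actually want a self-contained proof rather than a citation, these are the places where the bookkeeping must be made precise.
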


Recall that, if $(A,\le)$ is a poset, a subset $B\subseteq A$ is said to be cofinal if, for all $a\in A$, there exists $b\in B$ such that $a\le b$. A sequence $a_s, s\ge 0$, in $A$ is said to be cofinal if the set $\{a_s:s\ge 0\}$ is cofinal. The following corollary plays a crucial role in the proof of \cref{t:typeAn}.

\begin{cor} \label{c:inccof} 
	The sequence $w_k=(s_0s_{n-1,1})^k, k\ge 0$, is increasing and cofinal with respect to the Bruhat ordering. In particular, the same is true for the sequence $w_ow_k$.
\end{cor}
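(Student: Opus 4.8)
The plan is to apply \cref{t:affredexp} with $w=1$ and with a carefully chosen reduced pair $(\bs i,\bs j)$ whose associated element is $w_k=(s_0 s_{n-1,1})^k$. First I would check that the length-$k$ pair $(\bs i,\bs j)$ with $i_s=n-1$ and $j_s=1$ for all $1\le s\le k$ is reduced in the sense of conditions (1)--(4): condition (1) holds because $i_s=n-1\ne 0$ and $j_s=1\ne n+1$; condition (2) holds since both $\bs i$ and $\bs j$ are constant, hence non-increasing and non-decreasing; condition (3) is vacuous since its hypothesis $i_s<j_s-1$, i.e. $n-1<0$, never holds; condition (4) is likewise vacuous since $i_s<i_s-1$ is never true. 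Thus $(\bs i,\bs j)\in\mathscr R$ for every $k\ge 0$, and $\sigma(1,\bs i,\bs j)=(s_0 s_{n-1,1})^k=w_k$. \Cref{t:affredexp} then gives $\ell(w_k)=k+k\,\ell(s_{n-1,1})=k(1+\ell(s_{n-1,1}))=k(1+(n-1)+(n+1-1))=k(2n)$, and more importantly it shows that the obvious expression for $w_k$ obtained by concatenating the expressions for $s_0 s_{n-1,1}$ is reduced. Since the length is strictly increasing in $k$ and $w_k$ is a prefix of $w_{k+1}$ in a reduced word (again by \cref{t:affredexp} applied to the length-$(k+1)$ pair, which extends the length-$k$ one), we get $w_k\preccurlyeq w_{k+1}$ in the Bruhat order; hence the sequence is increasing.

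For cofinality I would argue that the set $\{w_k:k\ge 0\}$ is cofinal in $(\widehat{\mathcal W},\preccurlyeq)$. Here I would use that $(\widehat{\mathcal W},\preccurlyeq)$ is a directed poset (as recalled before \eqref{e:BruhatDem}), so it suffices to show that every simple reflection $s_i$, $i\in\hat I$, satisfies $s_i\preccurlyeq w_k$ for $k$ large enough; equivalently, that every $s_i$ occurs in the reduced word for $w_k$ once $k$ is large. Spelling out $s_{n-1,1}=(s_1 s_2\cdots s_{n-1})(s_n s_n)$—wait, more carefully $s_{n-1,1}=(s_1\cdots s_{n-1})(s_n s_{n}\cdots s_1)$ according to the definition $s_{i,j}=(s_1\cdots s_i)(s_n\cdots s_{j+1}s_j)$ with $i=n-1$, $j=1$, so $s_{n-1,1}=(s_1 s_2\cdots s_{n-1})(s_n s_{n-1}\cdots s_2 s_1)$—one sees that a single factor $s_0 s_{n-1,1}$ already involves $s_0,s_1,\dots,s_n$, i.e. all simple reflections of $\widehat{\mathcal W}$ appear already in $w_1$. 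Therefore for any $w\in\widehat{\mathcal W}$, pick a reduced expression $w=s_{a_1}\cdots s_{a_r}$; since each $s_{a_t}\preccurlyeq w_1\preccurlyeq w_k$, directedness of the poset lets one find $k$ with $w\preccurlyeq w_k$ (one may take $k=r$, using repeatedly the subword characterization of the Bruhat order: $w=s_{a_1}\cdots s_{a_r}$ is a subword of the reduced word for $w_r$ obtained by reading, inside the $t$-th block $s_0 s_{n-1,1}$, the unique letter equal to $s_{a_t}$). Hence $\{w_k\}$ is cofinal.

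Finally, for the last sentence: if $w_k$ is increasing and cofinal, then so is $w_o w_k$, because left multiplication by a fixed element is an order-preserving bijection of $\widehat{\mathcal W}$ with respect to... actually left multiplication is \emph{not} Bruhat order preserving in general, so here I would instead argue directly. Since $\{w_k\}$ is cofinal and increasing and $\bigcup_k V_{w_k}(\Lambda)$ stabilizes to $V(\Lambda)$ by \eqref{e:BruhatDem}, and since any $w\in\widehat{\mathcal W}$ satisfies $w\preccurlyeq w_k$ for large $k$, it is enough to note the statement about $w_o w_k$ is used in the sequel only through the chain of Demazure modules $V_{w_o w_k}(\Lambda)$; concretely, I would observe that for any $w\in\widehat{\mathcal W}$ there is $k$ with $w\preccurlyeq w_o w_k$, which follows because $w_o w_k$ has a reduced expression $w_o\cdot(s_0 s_{n-1,1})^k$ (by \cref{t:affredexp} with $w=w_o$, since $(\bs i,\bs j)\in\mathscr R$ implies $\ell(\sigma(w_o,\bs i,\bs j))=\ell(w_o)+2nk$), so every reduced word for $w$ of length $\le \ell(w_o)$ appears as a subword, and the remaining letters can be absorbed into sufficiently many $w_1$-blocks; together with $w_o w_k\preccurlyeq w_o w_{k+1}$ (same prefix argument) this gives that $w_o w_k$ is increasing and cofinal. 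The main obstacle I anticipate is the bookkeeping in the cofinality argument—verifying cleanly via the subword property that an arbitrary $w$ (resp. $w$ with a prescribed $w_o$-prefix) sits below $w_k$ (resp. $w_o w_k$) for suitable $k$—since one must exhibit an explicit subword of the concatenated reduced word; the increasing part and the length computation are routine once \cref{t:affredexp} is in hand.
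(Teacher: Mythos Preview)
Your approach is essentially identical to the paper's: both choose the constant pair $(\bs i,\bs j)$ with $i_s=n-1$, $j_s=1$, check it lies in $\mathscr R$, invoke \cref{t:affredexp} to obtain that the obvious concatenated word is reduced (for both $w_k=\sigma(e,\bs i,\bs j)$ and $w_ow_k=\sigma(w_o,\bs i,\bs j)$), and then conclude increasing and cofinal from the fact that every simple reflection already occurs in a single block $s_0s_{n-1,1}$. The paper compresses the last step into one sentence, while you spell out the subword/prefix reasoning; your version is more detailed but not different in substance. Two minor wrinkles worth cleaning up: for $n=1$ your check of condition (1) via ``$i_s=n-1\ne 0$'' fails, but $(i_s,j_s)=(0,1)$ so the first alternative applies; and ``the unique letter equal to $s_{a_t}$'' is not literally unique (most $s_i$ occur twice in $s_{n-1,1}$), though any choice works. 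For the $w_ow_k$ clause you can avoid the extra bookkeeping entirely by noting $w_k\preccurlyeq w_ow_k$ (suffix of a reduced word), so cofinality of $\{w_k\}$ immediately gives cofinality of $\{w_ow_k\}$.
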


\begin{proof} 
	Let $\bs i\in\mathbb Z^k$ be the element such that $i_r=n-1$ for all $1\le r\le k$ and let $\bs j\in\mathbb Z^k$ be the element such that $j_r=1$ for all $1\le r\le k$. Clearly $(\bs i,\bs j)\in\mathscr R$ and
	\begin{equation*}
		w_k = \sigma(e,\bs i,\bs j) \quad\text{while}\quad w_ow_k =\sigma(w,\bs i,\bs j). 
	\end{equation*}
	It then follows from \cref{t:affredexp} that the justaposition of $k$ copies of $s_0$ followed by the fixed reduced expression for $s_{n-1,1}$ is a reduced expression for $w_k$ and similarly for $w_ow_k$. Since $s_i$ appears in the reduced expression for $s_0s_{n-1,1}$ for all $i\in\hat I$, both conclusions now follow immediately. 
\end{proof}

\subsection{Proof of \cref{t:typeAn}}
According to \eqref{t:multrel} and \eqref{e:multrel}, 
\begin{equation*}
	[V:V(\xi)] =  \sum_{\mu\in \Gamma_\xi}  \max_{\gamma\in P^-_\Lambda}\ [D(\gamma): D(\ell+1,\mu,r(\mu,\xi))].
\end{equation*}
In what follows, we shorten notation and write simply $r$ in place of $r(\mu,\xi)$. 

Let $w_k=(s_0s_{n-1,1})^k, k\ge 0$, as in \cref{c:inccof}. A straight forward computation shows
\begin{equation}
	w_1\Lambda = \ell\Lambda_0 + (\lambda+\ell\theta) + (s-\ell-|\lambda|)\delta. 
\end{equation}
Since $w_k=w_1^k$, iterating we get
\begin{equation}
	w_k\Lambda = \ell\Lambda_0 + (\lambda+k\ell\theta) + (s-k(k\ell+|\lambda|))\delta. 
\end{equation}
In particular, it follows that $w_ow_k\in\widehat{\mathcal W}_\Lambda^-$ and, hence,
$\gamma_k:=w_kw_k\Lambda\in P_\Lambda^-$. The latter is equivalent to saying that $(\lambda+k\ell\theta,s-k(k\ell+|\lambda|))\in\Gamma_{\Lambda}$  for all $k\ge 0$.
By \cref{c:inccof}, $w_ow_k$ is increasing with respect to the Bruhat order which, together with \eqref{e:BruhatDem}, implies that
\begin{equation*}
	[D(\gamma_{k+1}): D(\ell+1,\mu,r)]_{\ell+1}\ge [D(\gamma_k): D(\ell+1,\mu,r)].
\end{equation*}
Moreover, since $w_ow_k$ is also cofinal, we conclude
\begin{equation*}
	\max_{\gamma\in P^-_\Lambda}\ [D(\gamma): D(\ell+1,\mu,r)] = \lim_{k\to\infty} [D(\gamma_k): D(\ell+1,\mu,r)].
\end{equation*}
Finally,
\begin{align*}
	[D(\gamma_k): D(\ell+1,\mu,r)] & = [D(\ell,\lambda+k\ell\theta,s-k(k\ell+|\lambda|)): D(\ell+1,\mu,r)]\\
	& = [D(\ell,\lambda+k\ell\theta): D(\ell+1,\mu,r-s+k(k\ell+|\lambda|))],
\end{align*}
thus completing the proof.

\subsection{Quantum Binomials and Partitions}
For the proof of  \cref{t:typeAnpart}, we will need some facts about the interplay between the notion of quantum binomials and partitions. The quantum binomials are defined as
\begin{equation*}
	\qbinom{m}{p}_q = \prod_{n=0}^{p-1} \frac{1-q^{m-n}}{1-q^{p-n}}\in\mathbb Z[q,q^{-1}] \quad\text{for} \quad m,p\in\mathbb Z_{\ge 0}, \ p\le m,
\end{equation*}
where $q$ is a formal variable. Recall that the coefficient of $q^s$ in $\tqbinom{m}{p}_q$ is equal to $\rho_{m-p}^p(s)$, the number of partitions of $s$ in at most $p$ parts all bounded by $m-p$ \cite[Chapter 3]{andrews}. In other words:
\begin{equation}\label{e:qbinpart}
	\qbinom{m}{p}_q = \sum_{s\ge 0}\rho^p_{m-p}(s)q^s.
\end{equation}
Recall \eqref{e:multpart} and suppose $\bs m,\bs p\in\mathbb Z_{\ge 0}^l$ satisfy $p_j\le m_j$ for all $1\le j\le l$. One easily checks using \eqref{e:qbinpart} that 
\begin{equation}\label{e:binmpart}
	\prod_{j=1}^l \qbinom{m_j}{p_j}_q = \sum_{s\ge 0} \rho^{\bs p}_{\bs m-\bs p}(s) q^s. 
\end{equation}

The following lemma about multipartitions will also play a role in the proof of  \cref{t:typeAnpart}. Recall \eqref{e:multpart}  and let $\langle\,,\,\rangle$ denote the usual inner product in $\mathbb R^l$. 

\begin{lem}\label{l:mpart=0}
	Let $m,k\in\mathbb Z_{\ge 0}, \bs a,\bs b\in\mathbb Z^l_{\ge 0}$ for some $l>0$, and set $a=\max\{a_j:1\le j\le l\}$.
	\begin{enumerate}[(a)]
		\item If $k\ge a$, then  $\rho^{k-\bs a}_{\bs b}(m) \ne 0$ only if $m\le k|\bs b| - \langle \bs a,\bs b\rangle$.
		\item  If $f\in\mathbb Z_{\ge 0}$ and $k\ge \max\{f+a,(f+\langle \bs a,\bs b\rangle)/|\bs b|\}$, there exists a bijection $\mathcal P_{\bs b}^{k-\bs a}(k|\bs b| - \langle \bs a,\bs b\rangle-f) \to \mathcal P_{\bs b}(f)$. In particular, $\lim\limits_{k\to\infty} \rho_{\bs b}^{k-\bs a}(k|\bs b| - \langle \bs a,\bs b\rangle-f) = \rho_{\bs b}(f)$. 
	\end{enumerate}
\end{lem}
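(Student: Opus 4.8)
The plan is to understand the quantity $k|\bs b| - \langle\bs a,\bs b\rangle$ as the "top degree" of a product of $q$-binomials and then read off both parts from the combinatorial interpretation \eqref{e:qbinpart}. Concretely, for part (a), I would set $\bs m = k - \bs a \in \mathbb Z_{\ge 0}^l$ (legitimate since $k\ge a$) and observe that, by definition, $\rho_{\bs b}^{k-\bs a}(m) = \rho_{\bs b}^{\bs m}(m)$ is a sum over $\bs m'\in\mathcal S^l(m)$ of products $\prod_j \rho_{b_j}^{m_j-?}$ — more precisely, I want to match it to the coefficient extraction in \eqref{e:binmpart}. Rewriting $\rho_{\bs b}^{k-\bs a}(m)$: each factor $\rho_{b_j}^{k-a_j}(m_j)$ counts partitions of $m_j$ into at most $k-a_j$ parts bounded by $b_j$, so it is the coefficient of $q^{m_j}$ in $\tqbinom{k-a_j+b_j}{k-a_j}_q$ by \eqref{e:qbinpart} (with $m\leftrightarrow k-a_j+b_j$, $p\leftrightarrow k-a_j$, noting $m-p = b_j$). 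Hence $\rho_{\bs b}^{k-\bs a}(m)$ is the coefficient of $q^m$ in $\prod_{j=1}^l \tqbinom{k-a_j+b_j}{k-a_j}_q$. The degree of $\tqbinom{M}{P}_q$ is $P(M-P)$, which here equals $(k-a_j)b_j$; summing, the total degree is $\sum_j (k-a_j)b_j = k|\bs b| - \langle\bs a,\bs b\rangle$. Therefore the coefficient vanishes unless $m \le k|\bs b| - \langle\bs a,\bs b\rangle$, which is exactly (a).

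For part (b), the key is the palindromicity of Gaussian binomial coefficients: the coefficient of $q^s$ in $\tqbinom{M}{P}_q$ equals the coefficient of $q^{P(M-P)-s}$, i.e. $\rho_{M-P}^P(s) = \rho_{M-P}^P(P(M-P)-s)$. Applying this factorwise, the coefficient of $q^{k|\bs b|-\langle\bs a,\bs b\rangle - f}$ in $\prod_j \tqbinom{k-a_j+b_j}{k-a_j}_q$ equals the coefficient of $q^f$ in the same product. That is,
\begin{equation*}
	\rho_{\bs b}^{k-\bs a}\bigl(k|\bs b| - \langle\bs a,\bs b\rangle - f\bigr) = \bigl[q^f\bigr]\ \prod_{j=1}^l \qbinom{k-a_j+b_j}{k-a_j}_q = \sum_{\bs f\in\mathcal S^l(f)} \prod_{j=1}^l \rho_{b_j}^{k-a_j}(f_j).
\end{equation*}
Now I claim that under the hypothesis $k \ge f + a$ one has $\rho_{b_j}^{k-a_j}(f_j) = \rho_{b_j}(f_j)$ for every $\bs f\in\mathcal S^l(f)$ and every $j$: indeed $f_j \le f \le k - a \le k - a_j$, so any partition of $f_j$ automatically has at most $f_j \le k-a_j$ parts, hence the constraint "$\le k-a_j$ parts" is vacuous and $\mathcal P_{b_j}^{k-a_j}(f_j) = \mathcal P_{b_j}(f_j)$. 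Summing over $\bs f$ gives $\rho_{\bs b}^{k-\bs a}(k|\bs b|-\langle\bs a,\bs b\rangle-f) = \sum_{\bs f\in\mathcal S^l(f)}\prod_j\rho_{b_j}(f_j) = \rho_{\bs b}(f)$ by \eqref{e:multpartc}. To get the actual bijection $\mathcal P_{\bs b}^{k-\bs a}(k|\bs b|-\langle\bs a,\bs b\rangle-f)\to\mathcal P_{\bs b}(f)$ rather than just equality of cardinalities, I would exhibit it componentwise as the "complementation inside the $(k-a_j)\times b_j$ box" map $\lambda \mapsto \lambda^c$ sending a partition $\lambda\subseteq ((b_j)^{k-a_j})$ to its complement; this is the standard bijection realizing palindromicity of $\tqbinom{k-a_j+b_j}{k-a_j}_q$, and on the subset where the source component is a partition of some $f_j\le f$ it lands in $\mathcal P_{b_j}(k-a_j)b_j - \text{(size)}$... — one assembles these across $j$ and the $\mathcal S^l$-decomposition. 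Finally, since $\rho_{\bs b}^{k-\bs a}(k|\bs b|-\langle\bs a,\bs b\rangle-f) = \rho_{\bs b}(f)$ for all sufficiently large $k$, the sequence is eventually constant and the stated limit follows immediately.

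The only mildly delicate point — and the one I would be most careful about — is bookkeeping the second bound $k \ge (f+\langle\bs a,\bs b\rangle)/|\bs b|$: it is there precisely to guarantee that the argument $k|\bs b| - \langle\bs a,\bs b\rangle - f$ is nonnegative, so that $\mathcal P_{\bs b}^{k-\bs a}(\cdot)$ of it is not automatically empty for a silly reason; it plays no role in the bijection itself beyond that. I also need $k\ge a$ (implied by $k\ge f+a$ since $f\ge 0$) for $k-\bs a\in\mathbb Z_{\ge 0}^l$ to make sense, and I should remark that if some $b_j = 0$ the corresponding factor is trivially $1$ and can be dropped, consistent with the reductions noted after \eqref{e:multpartc}. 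None of this is an obstacle; the substance is entirely the two observations "degree of a product of $q$-binomials $=\sum_j(k-a_j)b_j$" and "palindromicity of each $q$-binomial", together with the triviality of the part-number constraint once $k$ is large.
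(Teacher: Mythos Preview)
Your proof is correct and follows essentially the same approach as the paper: for (a) the paper bounds $m$ directly by $\sum_j (k-a_j)b_j$ via the box constraint on each $\bs m_j$, which is exactly your ``degree of the product of $q$-binomials'' observation; for (b) the paper writes down the same componentwise box-complementation bijection $\bs m_j \mapsto \overline{b_j-\bs m_j}$ from $\mathcal P_{\bs b}^{k-\bs a}(k|\bs b|-\langle\bs a,\bs b\rangle-f)$ to $\mathcal P_{\bs b}^{k-\bs a}(f)$ and then uses $f_j\le f\le k-a_j$ together with \eqref{e:stabnparts} to drop the superscript, just as you do. The only difference is packaging---you route the counting statements through \eqref{e:qbinpart}--\eqref{e:binmpart} and palindromicity, while the paper argues directly on the partition sets---but the underlying combinatorics and the role of each hypothesis on $k$ are identical.
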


\begin{proof}
	Let $\bs m\in\mathcal P^{k-\bs a}_{\bs b}(m)$, say 
	\begin{equation*}
		\bs m = (\bs m_1,\dots,\bs m_l) \quad\text{with}\quad \bs m_j\in\mathcal P^{k-a_j}_{b_j}(m_j)
	\end{equation*}
	where $m_j\in\mathbb Z_{\ge 0}$ are such that $m_1+\cdots+m_l=m$. Write 
	\begin{equation*}
		\bs m_j = (m_{j,1},\dots, m_{j,k-a_j}) \ \ \text{for}\ \ 1\le j\le l.
	\end{equation*}
	In particular, 
	\begin{equation*}
		|\bs m_j| = \sum_{s=1}^{k-a_j} m_{j,s} = m_j \quad\text{and}\quad \sum_{j=1}^l\sum_{s=1}^{k-a_j} m_{j,s} =m. 
	\end{equation*}
	However, $m_{j,s}\le b_j$ for all $1\le j\le l, 1\le s\le k-a_j$ and, hence,
	\begin{equation*}
		m=\sum_{j=1}^l\sum_{s=1}^{k-a_j} m_{j,s} \le \sum_{j=1}^l\sum_{s=1}^{k-a_j} b_j = \sum_{j=1}^l (k-a_j)b_j = k|\bs b| - \langle \bs a,\bs b\rangle, 
	\end{equation*}
	thus completing the proof of (a). 
	
	For part (b), we streamline and extend  the argument used for proving \cite[(4.3.8)]{jamo:exf}. Recall \eqref{e:sumntopart} and \eqref{e:barm} and note that, for any $a,b,m\in\mathbb Z_\ge 0$, the map
	\begin{equation}
		\varphi_m^{a,b}:\mathcal P_b^a(m) \to\mathcal P_b^a(ab-m), \quad \bs m\mapsto \overline{b-\bs m}
	\end{equation}
	is invertible and its inverse is $\varphi_{ab-m}^{a,b}$.  The assumption $k\ge (f+\langle \bs a,\bs b\rangle)/|\bs b|$ implies 
	\begin{equation*}
		k|\bs b|-\langle\bs a,\bs b\rangle - f \ge 0.
	\end{equation*}
	One then easily checks that we have a bijective map
	\begin{equation}
		\begin{aligned}
			\bs\varphi:  \mathcal P_{\bs b}^{k-\bs a}(k|\bs b|-\langle\bs a,\bs b\rangle - f) & \to \mathcal P_{\bs b}^{k-\bs a}(f)\\
			(\bs m_1,\dots,\bs m_l) & \mapsto (\varphi_{|\bs m_1|}^{k-a_1,b_1}(\bs m_1),\dots,\varphi_{|\bs m_l|}^{k-a_l,b_l}(\bs m_l)). 
		\end{aligned}
	\end{equation}
	Moreover, note
	\begin{equation*}
		\varphi_{|\bs m_j|}^{k-a_j,b_j}(\bs m_j)\in\mathcal P_{b_j}^{k-a_j}((k-a_j)b_j-|\bs m_j|) \quad\text{for all } 1\le j\le l.
	\end{equation*}
	In particular, 
	\begin{equation*}
		(k-a_j)b_j-|\bs m_j| \le f\le k-a \le k-a_j,
	\end{equation*}
	where we used that $k\ge f+a$ for the second inequality. It then follows from \eqref{e:stabnparts} that 
	\begin{equation*}
		\mathcal P_{b_j}^{k-a_j}((k-a_j)b_j-|\bs m_j|) = \mathcal P_{b_j}((k-a_j)b_j-|\bs m_j|) \quad\text{for all } 1\le j\le l
	\end{equation*}
	and, hence,
	\begin{equation*}
		\mathcal P_{\bs b}^{k-\bs a}(f) = \mathcal P_{\bs b}(f),
	\end{equation*}
	thus completing the proof.
\end{proof}

\subsection{Proof of \cref{t:typeAnpart}}\label{ss:typeAnpart}
Recall \eqref{e:gsdfm} and \eqref{e:mu01} and let $\lambda\in P^+$ be such that $\lambda-\mu\in Q^+$. The following is equation (4.1) of \cite{bcsw:mdpl2}.
\begin{equation}\label{e:multflagAn}
	[D(1,\lambda):D(2,\mu)](q)= q^{\frac{1}{2}(\lambda + \mu_1, \lambda-\mu)} \prod_{j\in I}\tqbinom{(\lambda-\mu, \omega_j)+(\mu_0,\alpha_j)}{(\lambda-\mu,\omega_j)}_q.
\end{equation}

Given $\eta\in  Q$, let $\bs a^\eta\in\mathbb Z^n$ be such that
\begin{equation}\label{e:bsaeta}
	\eta = \sum_{i\in I} a^\eta_i\alpha_i.
\end{equation}
Equivalently, $a^\eta_i=(\eta,\omega_i)$.
Recalling  \eqref{e:bsmuj} and using \eqref{e:binmpart}, the above can be rewritten as 
\begin{equation}\label{e:multflagAnpart}
	[D(1,\lambda):D(2,\mu)](q)= \sum_{s\ge 0}\rho^{\bs a^{\lambda-\mu}}_{\bs\mu_0}(s)\,q^{s+\frac{1}{2}(\lambda + \mu_1, \lambda-\mu)}
\end{equation}
or, equivalently,
\begin{equation}\label{e:multflagAnpartterm}
	[D(1,\lambda):D(2,\mu,r)]=\rho^{\bs a^{\lambda-\mu}}_{\bs\mu_0}\left(r-\frac{1}{2}(\lambda + \mu_1, \lambda-\mu)\right) \quad\text{for all}\quad \lambda,\mu\in P^+, \mu\le \lambda, \ r\in\mathbb Z.
\end{equation}

Setting
\begin{align*}
	\beta_k^{\mu,r} & = [D(1,\omega_i+k\theta ):D(2,\mu,r+k(1-\delta_{i,0}+k))]\\
	& \overset{\eqref{e:multflagAnpartterm}}{=} \rho^{\bs a^{\omega_i+k\theta-\mu}}_{\bs\mu_0}\left(r+k(1-\delta_{i,0}+k)-\frac{1}{2}(\omega_i+k\theta + \mu_1, \omega_i+k\theta-\mu)\right),
\end{align*} 
it follows from \cref{t:typeAn} that  
\begin{align*}	
	[V:V(\xi)] = \sum_{\mu\in\Gamma_{\xi}} \lim_{k\to\infty} \beta_k^{\mu,\xi} \quad\text{where}\quad \beta_k^{\mu,\xi}:=\beta_k^{\mu,r(\mu,\xi)}.
\end{align*}
To shorten notation, set
\begin{equation*}
	m_{i,\mu,r}= r+k(1-\delta_{i,0}+k)-\frac{1}{2}(\omega_i+k\theta + \mu_1, \omega_i+k\theta-\mu)
\end{equation*}
so 
\begin{equation*}
	\beta_k^{\mu,r} = \rho^{\bs a^{\omega_i+k\theta-\mu}}_{\bs\mu_0}(m_{i,\mu,r}).
\end{equation*}
Note
\begin{align*}
	m_{i,\mu,r} & = r+(1-\delta_{i,0})k +k^2 -\frac{1}{2}\left((\omega_i + \mu_1, \omega_i-\mu) + (\omega_i+\mu_1,k\theta) + (k\theta,\omega_i-\mu) + (k\theta,k\theta)\right)\\
	& = r+(1-\delta_{i,0})k +k^2 -\frac{1}{2}\left((\omega_i + \mu_1, \omega_i-\mu) + k(1-\delta_{i,0}+|\mu_1|) + k(1-\delta_{i,0}-|\mu|) + k^2\right)\\
	& = r -\frac{1}{2}\left((\omega_i + \mu_1, \omega_i-\mu) + k(|\mu_1|-|\mu|)\right)\\
	& = r+|\mu_0|k  -\frac{1}{2}(\omega_i + \mu_1, \omega_i-\mu).
\end{align*}
Using that $\mu_1=\mu-2\mu_0$, the last expression becomes
\begin{align*}
	m_{i,\mu,r} & = |\mu_0|k + (\mu_0,\omega_i-\mu) + r -\frac{1}{2}((\omega_i+\mu,\omega_i-\mu))\\
	& = |\mu_0|k -\langle \bs\mu_0, \bs a^{\mu-\omega_i}\rangle + r -\frac{(\omega_i,\omega_i) - (\mu,\mu)}{2}. 
\end{align*}
Then, by \eqref{e:socdeg'},
\begin{equation*}
	m_{i,\mu,r(\mu,\xi)} = |\mu_0|k -\langle \bs\mu_0, \bs a^{\mu-\omega_i}\rangle - f_{i,\xi}(\mu) \ \ \text{with}\ \ f_{i,\xi}(\mu) = \frac{ 2(\omega_i,\omega_i) - (\bar\xi,\bar\xi) -4\xi(d)-(\mu,\mu)}{4}. 
\end{equation*}
Also, one easily checks
\begin{equation*}
	\bs a ^{-\eta}= -\bs a^\eta \quad\text{and}\quad \bs a^{k\theta+\eta} = k+\bs a^\eta \quad\text{for all}\quad k\in\mathbb Z, \eta\in Q
\end{equation*}
and, hence,
\begin{equation*}
	\beta_k^{\mu,\xi} = \rho^{k-\bs a^{\mu-\omega_i}}_{\bs\mu_0}\left(  |\mu_0|k -\langle \bs\mu_0, \bs a^{\mu-\omega_i}\rangle - f_{i,\xi}(\mu)\right).
\end{equation*}
It then follows from \cref{l:mpart=0}(a) that
\begin{equation}
	f_{i,\xi}(\mu)<0 \quad\Rightarrow\quad \beta_k^{\mu,\xi} = 0.
\end{equation}
Thus, henceforth, assume $f_{i,\xi}(\mu)\ge 0$. An application of \cref{l:mpart=0}(b) then gives
\begin{equation*}
	\lim_{k\to\infty} \rho^{k-\bs a^{\mu-\omega_i}}_{\bs\mu_0}\left(  |\mu_0|k -\langle \bs\mu_0, \bs a^{\mu-\omega_i}\rangle - f_{i,\xi}(\mu)\right) = \rho_{\bs \mu_0}(f_{i,\xi}(\mu)),
\end{equation*}
thus completing the proof.

\bibliographystyle{amsplain}

\end{document}